\documentclass[11pt]{amsart}

\usepackage{a4,graphicx,mathrsfs,amssymb, tikz, cancel, color}
\usepackage[colorlinks=true]{hyperref}
\usepackage{enumerate}

\setlength{\oddsidemargin}{50pt}
\setlength{\evensidemargin}{50pt}
\setlength{\parindent}{1em}
\frenchspacing
\sloppy

\newtheorem{theorem}{Theorem}[section]
\newtheorem{lemma}[theorem]{Lemma}
\newtheorem{corollary}[theorem]{Corollary}

\theoremstyle{definition}
\newtheorem{remark}[theorem]{Remark}

\theoremstyle{remark}

\numberwithin{equation}{section}


\renewcommand{\AA}{\mathscr{A}}
\newcommand{\BB}{\mathscr{B}}

\newcommand{\HH}{\mathscr{H}}

\newcommand{\VV}{\mathcal{V}}

\newcommand{\field}[1]{\mathbb{#1}}

\newcommand{\R}{\field{R}}
\newcommand{\N}{\field{N}}

\renewcommand{\Re}{\mathop{\text{\upshape{Re}}}}

\newcommand{\supp}{\mathop{\rm{supp}}}

\renewcommand{\div}{\mathop{\rm{div}}}

\newcommand{\la}{\lambda}
\newcommand{\rh}{\rho}

\newcommand{\Ga}{\Gamma}
\newcommand{\De}{\Delta}

\newcommand{\Om}{\Omega}





\renewcommand{\bar}[1]{\overline{#1}}
\renewcommand{\tilde}[1]{\widetilde{#1}}

\newcommand{\md}[1]{\color{black}#1\color{black}}

\allowdisplaybreaks

\begin{document}
\title[A Plate-Membrane Transmission Problem]{Regularity and asymptotic behaviour for a damped plate-membrane transmission problem}

\author{Bienvenido Barraza Mart\'inez}
\address{B.\ Barraza Mart\'inez, Universidad del Norte, Departamento de Matem\'aticas y Estad\'istica, Barranquilla, Colombia}
\email{bbarraza@uninorte.edu.co}

\author{Robert Denk}
\address{R.\ Denk, Universit\"at Konstanz, Fachbereich f\"ur Mathematik und Statistik, Konstanz, Germany}
\email{robert.denk@uni-konstanz.de}

\author{Jairo Hern\'andez Monz\'on}
\address{J.\ Hern\'andez Monz\'on, Universidad del Norte, Departamento de Matem\'aticas y Estad\'istica, Barranquilla, Colombia}
\email{jahernan@uninorte.edu.co}

\author{Felix Kammerlander}
\address{F.\ Kammerlander, Universit\"at Konstanz, Fachbereich f\"ur Mathematik und Statistik, Konstanz, Germany}
\email{felix.kammerlander@uni-konstanz.de}

\author{Max Nendel}
\address{M.\ Nendel, Universit\"at Bielefeld, Institut f\"ur Mathematische Wirtschaftsforschung, Bielefeld, Germany}
\email{max.nendel@uni-bielefeld.de}

\renewcommand{\shortauthors}{B. Barraza Mart\'inez et al.}

\date{\today}

\let\thefootnote\relax\footnote{Financial Support through DAAD, COLCIENCIAS via Project 121571250194 and the German Research Foundation via CRC 1283 ``Taming Uncertainty'' is gratefully acknowledged.}

\begin{abstract}
We consider a transmission problem where a structurally damped plate equation is coupled with a damped or undamped wave equation by transmission conditions. We show that exponential stability holds in the damped-damped situation and polynomial stability (but no exponential stability) holds in the damped-undamped case. Additionally, we show that the solutions first defined by the weak formulation, in fact have higher Sobolev space regularity.
\end{abstract}

\subjclass[2010]{74K20; 74H40; 35B40; 35Q74}

\keywords{Plate-membrane equation, transmission problem, asymptotic behaviour}

\maketitle

\section{Introduction}
In this paper, we study a coupled plate-membrane  system, where we assume structural damping for the plate and damping / no damping for the wave equation. More precisely, we consider the following geometric situation: Let $\Omega\subset\R^2$ be a bounded $C^4$-domain with boundary $\Gamma$, and let $\Omega_2\subset\Omega$ be a non-empty bounded $C^4$-domain satisfying $\bar\Omega_2\subset \Omega$. We set $\Omega_1:=\Omega\setminus\bar\Omega_2$ and $I := \partial \Omega_2$. Then $I$ is the interface between $\Omega_1$ and $\Omega_2$ (see Figure~\ref{fig1} for the geometric situation). By $\nu$, we denote the outer unit normal with respect to $\Omega_1$ both on $\Gamma$ and on $I$.

In $\Omega_1\cup \Omega_2$, we consider the plate-membrane (plate-wave) system
\begin{align}
	u_{tt} + \De^2 u - \rho \De u_t 	& = 0\quad \text{in} \, (0,\infty)\times\Om_1, 	\label{eq_om_1}\\
w_{tt} - \De w + \beta w_t			& = 0\quad  \text{in}\, (0,\infty)\times \Om_2,	\label{eq_om_2}
\end{align}
where $\rho\ge 0$ and $\beta\ge 0$ are fixed constants. For $\rho>0$, we have structural damping for the plate equation \eqref{eq_om_1}, whereas the coefficient $\beta\ge 0$ describes the damping (or the absence of damping) for the wave equation \eqref{eq_om_2}.
On the outer boundary $\Gamma$, we impose clamped (Dirichlet) boundary conditions
\begin{equation}
	u =\partial_\nu u 					= 0\quad \text{on}\, (0,\infty)\times \Ga .		\label{eq_bc_1}
\end{equation}

\begin{figure}[ht]
	\begin{center}
	\begin {tikzpicture}[x=3em,y=3em]
		\draw (0,0) ellipse (1.05  and 0.7);
		\draw (0.5, 0.2 ) ellipse (2.12 and 1.58);
		\draw [->,bend left=15]  (-2.2, 0.5) to (-1.6,0.35);
		\draw [->,bend left=15] (1.3, -0.5) to (0.95,-0.3);
		\draw [->, thick] (0.77,0.47) to node[below]{$\quad\nu$} +(-0.5,-0.6);
		\draw [->, thick] (2.58,0.5) to node[below]{$\nu$} +(0.8,0.2);
		\node at (-0.3,0) {$\Omega_2$};
		\node at (-0.3, 1) {$\Omega_1$};
		\node at (-2.5,0.5) {$\Gamma$};
		\node at (1.5,-0.5) {$I$};
	\end{tikzpicture}
	\end{center}
	\caption{The set $\Omega=\Omega_1\cup I \cup\Omega_2$. \label{fig1}}
\end{figure}
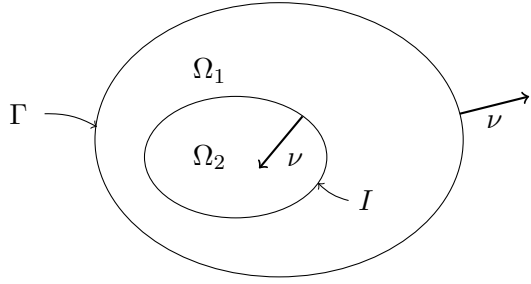

On the interface $I$, we have transmission conditions of the form
\begin{align}
	u																& = w\qquad \,\,\,\, \text{on}\, (0,\infty)\times I, 				\label{eq_tmc_0}\\
 \BB_1 u		 													& = 0\qquad \quad \text{on}\, (0,\infty)\times I, 					\label{eq_tmc_1}\\
 \BB_2 u -\rho \partial_\nu u_t 									& = \md{-}\partial_\nu w\quad  \text{on}\, (0,\infty)\times I 	\label{eq_tmc_2}
\end{align}
with
\[
 \BB_1 u:=\De u+(1-\mu)B_1u\quad \text{and}\quad\BB_2 u:=\partial_\nu\De u+(1-\mu)\partial_\tau B_2u,
\]
where
\[
 B_1u:= - \langle \tau,(\nabla^2 u)\tau \rangle\quad \text{and}\quad B_2u:=\langle \tau , (\nabla^2 u)\nu\rangle.
\]
Here, $\mu\in \big(0,\frac{1}{2}\big)$ is Poisson's ratio and $\tau:=(-\nu_2,\nu_1)^\top$. As we have a coupling of a fourth-order equation with a second-order equation, we have two transmission conditions (\eqref{eq_tmc_0} and \eqref{eq_tmc_2}) and one boundary condition \eqref{eq_tmc_1} on the interface $I$.

Finally, the boundary-transmission problem \eqref{eq_om_1}--\eqref{eq_tmc_2} is endowed with initial conditions of the form
\begin{align}
   u|_{t=0} & = u_0,\; u_t|_{t=0} = u_1 \quad \text{in } \Omega_1, \label{eq_init_1}\\
  w|_{t=0} & = w_0,\; w_t|_{t=0} = w_1\quad\text{in }\Omega_2. \label{eq_init_2}
\end{align}

The aim of the present paper is to investigate well-posedness as well as regularity and stability of the solution of \eqref{eq_om_1}--\eqref{eq_init_2}. Note that we omitted all physical constants for simplicity. Concerning the modelling of plate-membrane systems and more detailed models including physical constants, we refer to, e.g.,  \cite{Arango-Lebedev-Vorovich98}, \cite{Gazizullin-Paimushin16}, and \cite{Hernandez05}.

It is well known that the structurally damped plate equation itself has exponential stability and leads to the generation of an analytic $C_0$-semigroup even in the $L^p$-setting, see \cite{Denk-Schnaubelt15} and the references therein. Due to the hyperbolic structure of the wave equation \eqref{eq_om_2}, $L^p$-theory is not feasible for the coupled system, and we will consider the plate-membrane system in an $L^2$-framework. It is not hard to see the for all $\rho\ge 0$ and $\beta\ge 0$ we have well-posedness, i.e. generation of a $C_0$-semigroup in the corresponding $L^2$-Sobolev spaces (see Theorem~\ref{thm:wellposed} below). The main results of the present paper state that we have exponential stability if both dampings are present ($\rho>0$ and $\beta>0$) but no exponential stability if the wave equation is undamped ($\beta=0$), see Theorems~\ref{3.1} and \ref{3.2}. In the case of a structurally damped plate equation and an undamped wave equation ($\rho>0$ and $\beta=0$) we obtain polynomial stability (Theorem~\ref{thm:pol}).  Moreover, the ``good'' parabolic structure of the damped plate equation implies high elliptic regularity for $u$ and $w$ (Theorem~\ref{4.5}). In particular, the transmission conditions \eqref{eq_tmc_0}--\eqref{eq_tmc_2} hold in the sense of boundary traces.

There is a huge amount of literature on transmission problems for elastic systems, most of them dealing with wave-wave systems. For wave-plate transmission problems, we mention \cite{Hassine16}, where Kelvin-Voigt damping for the plate equation is considered (see also \cite{Hassine16a} for the one-dimensional case). In \cite{Ammari-Nicaise10} exponential stability was obtained for a damped wave / damped plate transmission problem under some geometric condition which leads to a flat interface. This was generalized in \cite{Zhang-Zhang15} to a model with  curved middle surface by virtue of geometric multiplier method. In \cite{Gong-Yang-Zhao17}, stabilization of a damped wave / damped plate system with variable coefficients is studied by means of a Riemannian geometrical approach. For stability of coupled wave-plate systems within the same domain, we mention, e.g., \cite{Liu-Su06}.

Whereas the above mentioned results show exponential stability for many cases of damped-damped systems, this cannot be expected in the damped-undamped situation where we have, from a mathematical point of view, a parabolic-hyperbolic coupled system (see, e.g., \cite{Avalos-Lasiecka-Triggiani16}, \cite{Batty-Paunonen-Seifert16}, \cite{Duyckaerts07} for heat-wave systems).

For transmission problems in (thermo-)viscoelasticity, we mention, e.g., \cite{MR3193931}, \cite{MR3180871}, \cite{MR2807974}, \cite{MR2054600}, and \cite{MunozRivera-Racke17}. In particular, in \cite{MunozRivera-Racke17} polynomial stability for a (thermo-) viscoelastic damped-undamped system with Kelvin-Voigt damping has been shown. The proof is based on an extended version of a characterization of polynomial stability due to Borichev and Tomilov \cite{Borichev-Tomilov10}. It turns out that some arguments in \cite{MunozRivera-Racke17} can be adapted to the plate-wave situation considered in the present paper to show that the system is not exponentially but polynomially stable (Section~5).  We remark that our proof of polynomial stability is based on rather general methods which should be applicable for other transmission problems. However, by this method we do not obtain optimal polynomial rates. The proof of higher regularity (Section~4) uses arguments similar to \cite{Denk-Kammerlander18} where damped plate / undamped plate transmission problems were investigated. In particular, we apply the classical theory of parameter-dependent boundary value problems (see \cite{Agranovich-Vishik64}) to obtain sufficiently good estimates in the damped part.

The structure of the paper is as follows: In Section~2, we define the basic spaces and operators and show the generation of a $C_0$-semigroup of contractions. Exponential stability for $\rho>0$ and $\beta>0$ and non-exponential stability for $\beta=0$ is shown in Section~3,  whereas the proof of higher regularity based on parameter-elliptic theory can be found in Section~4. Finally, polynomial stability for $\rho>0$ and $\beta=0$ is proven in Section~5.

\section{Well-posedness}

We denote by $H^2_{\Ga}(\Om_1)$ the space of all $u\in H^2(\Om_1)$ with $u|_{\Ga}=\partial_\nu u|_{\Ga}=0$. On $H^2_{\Ga}(\Om_1)$ we consider the inner product
\[
 \langle u,v\rangle_{H^2_{\Ga}(\Om_1)}:= \int_{\Om_1}\nabla^2 u:\nabla^2 \overline{v}+\mu [u,\overline{v}]\, {\rm d}x,
\]
where
\[
 \nabla^2 u:\nabla^2 v:= u_{x_1x_1}v_{x_1x_1}+u_{x_2x_2}v_{x_2x_2}+2u_{x_1x_2}v_{x_1x_2}
\]
and
\[
 [u,v]:= u_{x_1x_1}v_{x_2x_2}+u_{x_2x_2}v_{x_1x_1}-2 u_{x_1x_2}v_{x_1x_2}
\]
for all $u,v\in H^2_{\Ga}(\Om_1)$. We thus have that
\[
 \langle u,v\rangle_{H^2_{\Ga}(\Om_1)}=\mu \langle \Delta u,\Delta v\rangle_{L^2(\Om_1)}+(1-\mu) \langle \nabla^2 u,\nabla^2 v\rangle_{L^2(\Om_1)^{4}}
\]
for all $u,v\in H^2_{\Ga}(\Om_1)$. By Poincar\'e's inequality, we have that
\begin{align*}
\Vert u \Vert_{H^2(\Om_1)}^2 & \leq C \big(\Vert \nabla u \Vert_{L^2(\Om_1)^2}^2 + \Vert \nabla^2u \Vert_{L^2(\Om_1)^4}^2 \big)\\
& = C\big(  \Vert u_{x_1} \Vert_{L^2(\Om_1)}^2 + \Vert u_{x_2} \Vert_{L^2(\Om_1)}^2 + \Vert \nabla^2u \Vert_{L^2(\Om_1)^4}^2 \big) \\
& \leq C \big( \Vert \nabla u_{x_1} \Vert_{L^2(\Om_1)^2}^2 + \Vert \nabla u_{x_2}\Vert_{L^2(\Om_1)^2}^2  + \Vert \nabla^2u \Vert_{L^2(\Om_1)^4}^2 \big)\\
& \le C \Vert \nabla^2u \Vert_{L^2(\Om_1)^4}^2
\end{align*}
for $u\in H^2_\Ga(\Om_1)$. Here and in the following, $C$ denotes a generic constant which may change at each appearance.
The above estimate shows that  $\|\cdot \|_{H^2_{\Ga}(\Om_1)}$ is equivalent to the $H^2(\Om_1)$-norm on $H^2_{\Ga}(\Om_1)$. In particular, $\big(H^2_{\Ga}(\Om_1),\langle \cdot ,\cdot \rangle_{H^2_{\Ga}(\Om_1)}\big)$ is a Hilbert space.

We will also use the following result on integration by parts.

\begin{lemma}\label{Lemma-Green-formula-bilaplace-mu}(See \cite{Chueshov-Lasiecka10}, p. 27.)
For $u\in H^4(\Om_1)\cap H^2_\Ga(\Om_1)$ and $v\in H^2_\Ga(\Om_1)$ it holds
\begin{equation}\label{Eq-Green-formula-bilaplace-mu}
\langle \Delta^2u , v \rangle_{L^2(\Om)} = \langle u , v \rangle_{H^2_\Ga(\Om_1)} - \langle \BB_1u , \partial_\nu v \rangle_{L^2(I)} + \langle \BB_2u , v \rangle_{L^2(I)}.
\end{equation}
\end{lemma}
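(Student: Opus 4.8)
The plan is to reduce the identity to classical integration by parts, using the equivalent representation of the inner product
\[
a(u,v):=\langle u,v\rangle_{H^2_\Ga(\Om_1)}=\mu\langle\Delta u,\Delta v\rangle_{L^2(\Om_1)}+(1-\mu)\langle\nabla^2u,\nabla^2v\rangle_{L^2(\Om_1)^4}
\]
derived in the excerpt. First I would integrate by parts twice in each of the two summands so as to move all derivatives off $\bar v$. The volume contributions then recombine, since the $\mu$-term produces $\mu\Delta^2u$ and the $(1-\mu)$-term produces $(1-\mu)\sum_{i,j}\pa_i^2\pa_j^2u=(1-\mu)\Delta^2u$, into the pairing $\langle\Delta^2u,v\rangle$ on $\Om_1$. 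The task is thereby reduced to identifying the boundary integrals over $\pa\Om_1=\Ga\cup I$. Because $v\in H^2_\Ga(\Om_1)$ satisfies $v|_\Ga=\pa_\nu v|_\Ga=0$, every boundary term on $\Ga$ vanishes, so only the integrals over the interface $I$ survive.

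For the first summand, Green's second identity gives the contribution $\int_I(\Delta u\,\pa_\nu\bar v-\pa_\nu\Delta u\,\bar v)\,dS$, which already supplies the ``principal'' parts $\Delta u$ and $\pa_\nu\Delta u$ of $\BB_1u$ and $\BB_2u$. For the second summand I would work in the moving orthonormal frame $\{\nu,\tau\}$ along $I$ and decompose the restriction of $\nabla\bar v$ to $I$ as $\pa_\nu\bar v\,\nu+\pa_\tau\bar v\,\tau$. After the integration by parts the boundary integrand is a bilinear expression in the components of $\nabla^2u$ and in $\pa_\nu\bar v$, $\pa_\tau\bar v$, $\bar v$; the terms pairing with $\pa_\nu\bar v$ should assemble, together with $\Delta u$, into $\BB_1u=\Delta u+(1-\mu)B_1u$ with $B_1u=-\langle\tau,(\nabla^2u)\tau\rangle$.

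The decisive step is the elimination of the tangential derivative $\pa_\tau\bar v$. Since $I=\pa\Om_2$ is a closed $C^4$ curve, tangential integration by parts along $I$ produces no endpoint terms, so that $\int_I(\pa_\tau f)\,g\,dS=-\int_I f\,(\pa_\tau g)\,dS$; applying this transfers the tangential derivative from $\bar v$ onto the $u$-coefficient and creates precisely the term $(1-\mu)\pa_\tau B_2u$ appearing in $\BB_2u$, with $B_2u=\langle\tau,(\nabla^2u)\nu\rangle$. Collecting the $\bar v$-terms then yields $\langle\BB_2u,\bar v\rangle_{L^2(I)}$ and the $\pa_\nu\bar v$-terms yield $-\langle\BB_1u,\pa_\nu\bar v\rangle_{L^2(I)}$, which is the asserted formula \eqref{Eq-Green-formula-bilaplace-mu}.

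I expect the main obstacle to lie in the careful bookkeeping of this last step: commuting normal and tangential derivatives on the curved interface generates curvature contributions, and one must verify that these, together with the off-diagonal components of $\nabla^2u$ in the frame $\{\nu,\tau\}$, combine exactly into $B_1u$ and $\pa_\tau B_2u$ without leaving spurious remainders. Since the identity is classical for Kirchhoff plates with free-type boundary operators, one may alternatively simply invoke \cite{Chueshov-Lasiecka10} as we do; the sketch above indicates how the specific forms of $B_1$ and $B_2$ emerge from the $\mu$-dependent bilinear form.
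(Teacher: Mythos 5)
Your outline is correct, but note that the paper does not prove this lemma at all: it simply cites \cite{Chueshov-Lasiecka10}, p.~27, where the identity is established for Kirchhoff plates with free boundary operators. What you sketch is precisely the standard derivation behind that citation, so your route is a genuine (and more informative) alternative to the paper's. Carrying out your plan: the $\mu$-term gives, via Green's second identity, $\int_{\Om_1}\Delta^2u\,\bar v-\int_{\pa\Om_1}(\pa_\nu\Delta u)\bar v+\int_{\pa\Om_1}\Delta u\,\pa_\nu\bar v$; the $(1-\mu)$-term gives $\int_{\Om_1}\Delta^2u\,\bar v-\int_{\pa\Om_1}(\pa_\nu\Delta u)\bar v+\int_{\pa\Om_1}\langle(\nabla^2u)\nu,\nabla\bar v\rangle$; after decomposing $\nabla\bar v=(\pa_\nu\bar v)\nu+(\pa_\tau\bar v)\tau$ on $I$, using $\langle\nu,(\nabla^2u)\nu\rangle=\Delta u+B_1u$ and the tangential integration by parts $\int_I(B_2u)\pa_\tau\bar v=-\int_I\pa_\tau(B_2u)\bar v$ on the closed curve $I$, the $\pa_\nu\bar v$-coefficients sum to $\mu\Delta u+(1-\mu)(\Delta u+B_1u)=\BB_1u$ and the $\bar v$-coefficients to $-\pa_\nu\Delta u-(1-\mu)\pa_\tau B_2u=-\BB_2u$, which is exactly \eqref{Eq-Green-formula-bilaplace-mu}. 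One remark on your anticipated obstacle: no curvature corrections actually arise here, because at no point does one need to commute normal and tangential derivatives --- the frame decomposition of $\nabla\bar v$ is purely pointwise, and $\pa_\tau B_2u$ is by definition the tangential derivative of the full scalar $B_2u=\langle\tau,(\nabla^2u)\nu\rangle$ (derivatives of $\nu$ and $\tau$ are absorbed into that definition). Curvature terms of the form $(\div\nu)\pa_\nu u$ only appear if one re-expresses $\BB_1,\BB_2$ in powers of $\pa_\nu$ and $\pa_\tau$, as the paper does later in Corollary~\ref{4.3}; for the identity as stated your bookkeeping closes without remainder. A density argument in $v$ (the computation needs $v\in H^2$ only) finishes the proof for general $v\in H^2_\Ga(\Om_1)$.
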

Let
\[
 \HH:=\big\{U = (u_1, v_1, u_2, v_2)^\top \in H^2_{\Ga}(\Om_1)\times L^2(\Om_1) \times H^1(\Om_2)\times L^2(\Om_2) \colon u_1|_{I}=u_2|_{I}\big\}
\]
be endowed with the inner product
\begin{align*}
 \langle U, \widetilde U \rangle_\HH
 	& := \langle u_1,\widetilde u_1\rangle_{H^2_{\Ga}(\Om_1)} + \langle v_1, \widetilde v_1 \rangle_{L^2(\Om_1)} \\
 	& + \langle \nabla u_2,\nabla \widetilde u_2 \rangle_{L^2(\Om_2)^2} + \langle v_2 ,\widetilde v_2 \rangle_{L^2(\Om_2)}
\end{align*}
for $U, \widetilde U \in \HH$.
Then $(\HH,\langle\cdot , \cdot\rangle_\HH)$ is a Hilbert space. Note that we can omit the term $\langle u_2, \tilde u_2\rangle_{L^2(\Omega_2)}$ by Poincar\'e's inequality  as $u_1\chi_{\Omega_1} + u_2\chi_{\Omega_2} \in H^1_0(\Omega)$. Here, $\chi_{\Omega_j}$ stands for the characteristic function of $\Omega_j$.


We introduce the operator matrix $A$ given by
\[
 A:=\left(\begin{array}{cccc}
              0 & 1  & 0 & 0\\
              -\De^2 & \rho \De & 0 & 0\\
              0 & 0 & 0 & 1\\
              0 & 0 & \De & -\beta
             \end{array}
       \right).
\]
By \eqref{Eq-Green-formula-bilaplace-mu},
we have that
\begin{align*}
	\langle AU, \widetilde U \rangle_\HH
		& = \langle v_1, \widetilde u_1 \rangle_{H^2_{\Ga}(\Omega_1)} - \langle \Delta^2 u_1 - \rho \Delta v_1, \widetilde v_1 \rangle_{L^2(\Omega_1)} \\
			& \quad + \langle \nabla v_2, \nabla \widetilde u_2 \rangle_{L^2(\Omega_2)^{\md{2}}}	+ \langle \Delta u_2 - \beta v_2, \widetilde v_2 \rangle_{L^2(\Omega_2)} \\
		& = \langle v_1, \widetilde u_1 \rangle_{H^2_{\Ga}(\Omega_1)} + \langle \nabla v_2, \nabla \widetilde u_2 \rangle_{L^2(\Omega_2)^{\md{2}}}
			- \langle u_1, \widetilde v_1 \rangle_{H^2_{\Ga}(\Omega_1)} \\
			& \quad - \langle \BB_2 u_1, \widetilde v_1 \rangle_{L^2(\partial \Omega_1)}
				+ \langle \BB_1 u_1, \partial_\nu \widetilde v_1 \rangle_{L^2(\partial \Omega_1)} \\
			& \quad - \rho \langle \nabla v_1, \nabla \widetilde v_1 \rangle_{L^2(\Omega_1)^{\md{2}}} + \rho \langle \partial_\nu v_1, \widetilde v_1 \rangle_{L^2(\partial \Omega_1)} \\
			& \quad - \langle \nabla u_2, \nabla \widetilde v_2 \rangle_{L^2(\Omega_2)^{\md{2}}} - \beta \langle v_2, \widetilde v_2 \rangle_{L^2(\Omega_2)}
				\md{-} \langle \partial_\nu u_2, \widetilde v_2 \rangle_{L^2(I)}
\end{align*}
for all sufficiently smooth $U, \widetilde U$. This leads us to the following interpretation of the transmssion conditions \eqref{eq_tmc_1} and \eqref{eq_tmc_2}:
we say that $U$ \emph{satisfies the transmission conditions \eqref{eq_tmc_1} and \eqref{eq_tmc_2} weakly} if the equality
\begin{multline}\label{weak-tm}
	\langle AU, \Phi \rangle_\HH
		= \langle v_1, \varphi_1 \rangle_{H^2_{\Ga}(\Omega_1)} + \langle \nabla v_2, \nabla \varphi_2 \rangle_{L^2(\Omega_2)^{\md{2}}}
				- \langle u_1, \psi_1 \rangle_{H^2_{\Ga}(\Omega_1)} \\
			 \quad - \rho \langle \nabla v_1, \nabla \psi_1 \rangle_{L^2(\Omega_1)^{\md{2}}}
				- \langle \nabla u_2, \nabla \psi_2 \rangle_{L^2(\Omega_2)^{\md{2}}} - \beta \langle v_2, \psi_2 \rangle_{L^2(\Omega_2)}
\end{multline}
holds true for all
$\Phi = (\varphi_1, \psi_1, \varphi_2, \psi_2)^\top \in H^2_{\Ga}(\Omega_1) \times H^2_{\Ga}(\Omega_1) \times H^1(\Omega_2) \times H^1(\Omega_2)$
satisfying $\varphi_1 = \varphi_2$ \md{and $\psi_1 = \psi_2\ $} on $I.$ \\

Now, we consider the linear operator $\AA\colon D(\AA)\subset \HH\to \HH, \, U \mapsto AU$ with
\begin{align*}
	D(\AA) := \big\{ U \in \HH : & \,
			v_1 \in H^2_{\Ga}(\Omega_1), v_2 \in H^1(\Omega_2), \Delta^2 u_1 \in L^2(\Omega_1), \Delta u_2 \in L^2(\Omega_2), \\
		 & v_1 = v_2 \text{ on } I \text{ and } \eqref{eq_tmc_1}, \eqref{eq_tmc_2} \text{ are weakly satisfied} \big\}.
\end{align*}

As
\begin{equation}\label{dissieq}
	\md{\Re}\,\langle \AA U,U\rangle_\HH = -\rho \|\nabla v_1 \|_{L^2(\Om_1)^{\md{2}}}^2 - \beta \|\md{v_2} \|_{L^2(\Om_2)}^2\leq 0
\end{equation}
for all $U\in D(\AA)$, the operator $\AA$ is dissipative. The same argument shows that for any smooth solution $(u, w)$ of \eqref{eq_om_1}-\eqref{eq_tmc_2}, the energy
\begin{align*}
	E(t)	& := \md{\frac{1}{2}}\int_{\Omega_1} \mu \vert \Delta u(t) \vert^2 + (1-\mu) \vert \nabla^2 u(t) \vert^2 + \vert u_t(t) \vert^2 \, dx \\
			& \quad + \md{\frac{1}{2}}\int_{\Omega_2} \vert \nabla w(t) \vert^2 + \vert w_t(t) \vert^2 \, dx
\end{align*}
is decreasing and the dissipation is caused by the damping both in $\Omega_1$ and $\Omega_2$. Moreover, the system is still dissipative if only one of the damping terms is active
($\rho + \beta  > 0$)
and the system is conservative if there is no damping at all ($\rho = \beta = 0)$.

In what follows, we show that the system \eqref{eq_om_1}-\eqref{eq_tmc_2} is well-posed for any choice of $\rho \geq 0$ and $\beta \geq 0.$

\begin{theorem}\label{thm:wellposed}
	The operator $\AA\colon \HH \supset D(\AA) \to \HH$ generates a strongly continuous semigroup $(S(t))_{t\geq 0}$ of contractions on $\HH$.
\end{theorem}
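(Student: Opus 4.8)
The plan is to apply the Lumer--Phillips theorem: for a densely defined dissipative operator on a Hilbert space, generation of a $C_0$-semigroup of contractions follows once we verify that $\lambda - \AA$ is surjective for some (equivalently all) $\lambda > 0$; equivalently, that $\operatorname{ran}(\lambda - \AA) = \HH$. Dissipativity has already been recorded in \eqref{dissieq}, so the two remaining tasks are to show that $D(\AA)$ is dense in $\HH$ and that the range condition holds. I expect the range condition to be the main obstacle, since it requires solving an elliptic boundary-transmission problem and matching it with the weak formulation of the transmission conditions \eqref{eq_tmc_1}, \eqref{eq_tmc_2}.

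\medskip

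First I would fix $\lambda = 1$ (any $\lambda > 0$ works) and, given $F = (f_1, g_1, f_2, g_2)^\top \in \HH$, seek $U = (u_1, v_1, u_2, v_2)^\top \in D(\AA)$ with $(\id - \AA)U = F$. Reading off the block structure of $A$, the first and third rows give $v_1 = u_1 - f_1$ and $v_2 = u_2 - f_2$, which eliminates the velocity components and reduces the problem to a coupled elliptic system for $(u_1, u_2)$: a fourth-order equation of the form $u_1 + \Delta^2 u_1 - \rho \Delta(u_1 - f_1) = g_1 + f_1$ in $\Omega_1$ together with a second-order equation $u_2 - \Delta u_2 + \beta(u_2 - f_2) = g_2 + f_2$ in $\Omega_2$, coupled through the interface conditions $u_1 = u_2$ on $I$ and the weak transmission conditions. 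The natural strategy is variational: define a bounded sesquilinear form $a(U,\Phi)$ on the closed subspace
\[
	V := \big\{ (u_1, u_2) \in H^2_\Ga(\Om_1)\times H^1(\Om_2) : u_1|_I = u_2|_I \big\}
\]
by collecting exactly the interior terms appearing on the right-hand side of \eqref{weak-tm} (namely the $\langle u_1, \psi_1\rangle_{H^2_\Ga(\Om_1)}$, $\rho\langle\nabla v_1,\nabla\psi_1\rangle$, $\langle\nabla u_2,\nabla\psi_2\rangle$ and $\beta$-terms) plus the $L^2$-pairings forced by the $\id$ on the diagonal. The point of using the weak formulation \eqref{weak-tm} is that the boundary integrals $\langle \BB_1 u_1, \partial_\nu\widetilde v_1\rangle_{L^2(I)}$, $\langle\BB_2 u_1,\widetilde v_1\rangle_{L^2(I)}$ and $\langle\partial_\nu u_2,\widetilde v_2\rangle_{L^2(I)}$ are absent, so a solution of the variational problem automatically encodes the transmission conditions without requiring their traces to exist a priori.

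\medskip

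Coercivity of $a$ on $V$ is where the equivalence of $\|\cdot\|_{H^2_\Ga(\Om_1)}$ with the full $H^2(\Om_1)$-norm (established earlier via Poincar\'e) and the remark that $u_1\chi_{\Om_1} + u_2\chi_{\Om_2} \in H^1_0(\Om)$ do the work: the $H^2_\Ga$-term controls $u_1$, the $\|\nabla u_2\|^2_{L^2(\Om_2)}$-term together with the matching condition $u_1|_I = u_2|_I$ and Poincar\'e control $u_2$ in $H^1(\Om_2)$, and the lower-order $L^2$-contributions from the $\id$ are nonnegative. With boundedness and coercivity in hand, Lax--Milgram yields a unique $(u_1, u_2)\in V$ solving the variational equation against all test pairs. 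I would then read off that $\Delta^2 u_1 \in L^2(\Om_1)$ and $\Delta u_2 \in L^2(\Om_2)$ (by testing with $\Phi$ supported in the interior of each subdomain, so that $u_1 + \Delta^2 u_1 - \rho\Delta v_1 = g_1 + f_1$ and $u_2 - \Delta u_2 + \beta v_2 = g_2 + f_2$ hold distributionally), recover $v_1 = u_1 - f_1 \in H^2_\Ga(\Om_1)$ and $v_2 = u_2 - f_2 \in H^1(\Om_2)$, and verify $v_1|_I = v_2|_I$ from $u_1|_I = u_2|_I$ and $f_1|_I = f_2|_I$. Comparing the full variational identity with \eqref{weak-tm} then shows that \eqref{eq_tmc_1} and \eqref{eq_tmc_2} are weakly satisfied, so $U\in D(\AA)$ and $(\id - \AA)U = F$. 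Density of $D(\AA)$ follows since $D(\AA)$ contains all sufficiently smooth compactly supported test functions in each subdomain satisfying the matching condition, which are dense in $\HH$; alternatively, surjectivity of $\id - \AA$ together with dissipativity gives density automatically via the Lumer--Phillips machinery. The delicate point throughout is the bookkeeping of the interface terms: one must be careful that the weak formulation \eqref{weak-tm} genuinely captures both transmission conditions \eqref{eq_tmc_1} and \eqref{eq_tmc_2} simultaneously, and that no spurious boundary contribution on $\Ga$ survives thanks to the clamped conditions built into $H^2_\Ga(\Om_1)$.
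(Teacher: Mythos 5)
Your proposal is correct and follows essentially the same route as the paper's proof: Lumer--Phillips combined with the substitution $v_i = u_i - f_i$, a Lax--Milgram argument for the resulting coupled elliptic system on the space $\VV = \{(u_1,u_2)\in H^2_\Ga(\Om_1)\times H^1(\Om_2): u_1|_I=u_2|_I\}$, recovery of $\Delta^2 u_1\in L^2(\Om_1)$ and $\Delta u_2\in L^2(\Om_2)$ by testing against compactly supported functions, and verification of the weak transmission conditions by comparing the variational identity with \eqref{weak-tm}. The only cosmetic difference is that the paper's coercivity is immediate in the $\VV$-norm (Poincar\'e enters earlier, in showing that this norm is equivalent to the natural product norm), whereas you fold that step into the coercivity discussion.
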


\begin{proof}
First, we show that $1-\AA$ is surjective.
	Let $F = (f_1, g_1, f_2, g_2)^\top \in \HH.$ We need to show that there exists a $U = (u_1, v_1, u_2, v_2)^\top \in D(\AA)$ such that $(1-\AA)U = F,$ i.e.
	\begin{align*}
		u_1 - v_1								& = f_1, \\
		v_1 + \Delta^2 u_1 - \rho \Delta v_1	& = g_1, \\
		u_2 - v_2								& = f_2, \\
		v_2 - \Delta u_2 + \beta v_2			& = g_2.
	\end{align*}
	Plugging in $v_i = u_i - f_i$ for $i = 1, 2$, we have to solve
	\begin{align}
		u_1 + \Delta^2 u_1 - \rho \Delta u_1	& = f_1 + g_1 - \rho \Delta f_1,	\label{eq_surjective_1} \\
		u_2 - \Delta u_2 + \beta u_2			& = f_2 + g_2 + \beta f_2.			\label{eq_surjective_2}
	\end{align}
	Motivated by the notion of the weak transmission conditions, we introduce the space
	\begin{align*}
		\VV := \{ u=(u_1, u_2)^\top \in H^2_{\Ga}(\Omega_1) \times H^1(\Omega_2) : u_1 = u_2 \text{ on } I \}.
	\end{align*}
	Endowed with the scalar product
	\begin{align*}
		\langle u, \widetilde u \rangle_\VV = \langle u_1, \widetilde u_1 \rangle_{H^2_{\Ga}(\Omega_1)} + \langle \nabla u_2, \nabla \widetilde u_2 \rangle_{L^2(\Omega_2)^2}
		\qquad (u, \widetilde u \in \VV),
	\end{align*}
	$(\VV, \langle \cdot, \cdot \rangle_\VV)$ becomes a Hilbert space. \\
	In order to solve \eqref{eq_surjective_1}, \eqref{eq_surjective_2}, we will use the theorem of Lax-Milgram in the Hilbert space $\VV.$ Let $b \colon \VV \times \VV \to \R$
	be defined by
	\begin{align*}
		b\left( u, \varphi \right)
			& := \langle u_1, \varphi_1 \rangle_{L^2(\Omega_1)} + \langle u_1, \varphi_1 \rangle_{H^2_{\Ga}(\Omega_1)}
					+ \rho \langle \nabla u_1, \nabla \varphi_1 \rangle_{L^2(\Omega_1)^2} \\
				& \quad + (1+\beta) \langle u_2, \varphi_2 \rangle_{L^2(\Omega_2)} + \langle \nabla u_2, \nabla \varphi_2 \rangle_{L^2(\Omega_2)^2}.
	\end{align*}
	Obviously, $b$ is bilinear and continuous. Since
	\begin{align*}
		b(u, u)
			& = \Vert u_1 \Vert_{L^2(\Omega_1)}^2 + \Vert u_1 \Vert_{H^2_{\Ga}(\Omega_1)}^2 + \rho \Vert \nabla u_1 \Vert_{L^2(\Omega_1)^2}^2 \\
				& \quad + (1+\beta) \Vert u_2 \Vert_{L^2(\Omega_2)}^2 + \Vert \nabla u_2 \Vert_{L^2(\Omega_2)^2}^2 \\
			& \geq \Vert u_1 \Vert_{H^2_{\Ga}(\Omega_1)}^2 + \Vert \nabla u_2 \Vert_{L^2(\Omega_2)^2}^2
	\end{align*}
	holds for all $u \in \VV,$ the bilinear form $b$ is coercive on $\VV.$ Hence, there exists a unique $u \in \VV$ satisfying
	\begin{align}\label{eq_lax_milgram}
		b(u,  \varphi ) = \Lambda (\varphi )
	\end{align}
	for all $\varphi \in \mathcal{V}$, where the linear functional $\Lambda \colon \VV \to \R$ is given by
	\begin{align*}
		\Lambda(\varphi )
			& := \langle f_1 + g_1, \varphi_1 \rangle_{L^2(\Omega_1)} + \rho \langle \nabla f_1, \nabla \varphi_1 \rangle_{L^2(\Omega_1)^2} \\
				& \quad + \langle g_2 + (1 + \beta) f_2, \varphi_2 \rangle_{L^2(\Omega_2)}.
	\end{align*}
	Note that for $\varphi_1 \in C_0^\infty(\Omega_1)$ we have
	\begin{align*}
		\langle u_1, \varphi_1 \rangle_{H^2_{\Ga}(\Omega_1)}
			& = \langle u_1, \Delta^2 \varphi_1 \rangle_{L^2(\Omega_1)} - \langle u_1, \BB_2 \varphi_1 \rangle_{L^2(\partial \Omega_1)}
					+ \langle \partial_\nu u_1, \BB_1 \varphi_1 \rangle_{L^2(\partial \Omega_1)} \\
			& = \langle \Delta u_1, \Delta \varphi_1 \rangle_{L^2(\Omega_1)}.
	\end{align*}
	In particular, for any $(\varphi_1, \varphi_2) \in C_0^\infty(\Omega_1) \times C_0^\infty(\Omega_2) \subset \VV$, we have that \eqref{eq_surjective_1} and
	\eqref{eq_surjective_2} are satisfied in $L^2(\Omega_1)$ and $L^2(\Omega_2)$, respectively. This implies that $\Delta^2 u_1 \in L^2(\Omega_1)$ and
	$\Delta u_2 \in L^2(\Omega_2).$ We set
	\begin{align*}
		U := 	\begin{pmatrix}
					u_1 \\
					u_1 - f_1 \\
					u_2 \\
					u_2 - f_2
				\end{pmatrix} \in \HH.
	\end{align*}
	Finally, using \eqref{eq_surjective_1}, \eqref{eq_surjective_2} and \eqref{eq_lax_milgram}, we calculate
	\begin{align*}
		\langle \AA U, \Phi \rangle_{\HH}
			& = \langle v_1, \varphi_1 \rangle_{H^2_{\Ga}(\Omega_1)} - \langle \Delta^2 u_1 - \rho \Delta v_1, \psi_1 \rangle_{L^2(\Omega_1)} \\
				& \quad + \langle \nabla v_2, \nabla \varphi_2 \rangle_{L^2(\Omega_2)^2} + \langle \Delta u_2 - \beta v_2, \psi_2 \rangle_{L^2(\Omega_2)} \\
			& =  \langle v_1, \varphi_1 \rangle_{H^2_{\Ga}(\Omega_1)} - \langle g_1 + f_1, \psi_1 \rangle_{L^2(\Omega_1)} + \langle u_1, \psi_1 \rangle_{L^2(\Omega_1)} \\
				& \quad \quad + \langle \nabla v_2, \nabla \varphi_2 \rangle_{L^2(\Omega_2)^2} - \langle f_2 + g_2, \psi_2 \rangle_{L^2(\Omega_2)} \\
			& = \langle v_1, \varphi_1 \rangle_{H^2_{\Ga}(\Omega_1)} + \langle \nabla v_2, \nabla \varphi_2 \rangle_{L^2(\Omega_2)^2} \\
				& \quad - \rho \langle \nabla v_1, \nabla \psi_1 \rangle_{L^2(\Omega_1)^2} - \beta \langle v_2, \psi_2 \rangle_{L^2(\Omega_2)} \\
				& \quad  - \langle u_1, \psi_1 \rangle_{H^2_{\Ga}(\Omega_1)} - \langle \nabla u_2, \nabla \psi_2 \rangle_{L^2(\Omega_2)^2}
	\end{align*}
	for any $\Phi = (\varphi_1, \psi_1, \varphi_2, \psi_2)^\top \in H^2_{\Ga}(\Omega_1) \times H^2_{\Ga}(\Omega_1) \times H^1(\Omega_2) \times H^1(\Omega_2)$
	satisfying $\varphi_1 = \varphi_2$ \md{and $\psi_1 = \psi_2$}\  on $I.$
Therefore, $U$ satisfies the transmission conditions weakly. Hence, $U \in D(\AA)$ and $(1-\AA)U = F.$

As $\AA$ is dissipative and $1 - \AA$ is surjective,   $\AA$ generates a $C_0$-semigroup of contractions by the  Lumer-Phillips Theorem.
\end{proof}

\begin{remark}
In the same way as in the previous proof, one can show that the operator
$\AA$ is continuously invertible, i.e. $0$ belongs to the resolvent set
$\rho(\AA)$. To show this, we now have to consider%
\begin{align}
\Delta^{2}u_{1}  & =g_{1}-\rho\Delta f_{1}\text{,}\label{ec1 ZeroInResolvent}%
\\
-\Delta u_{2}  & =g_{2}+\beta f_{2}\label{ec2 ZeroInResolvent}%
\end{align}
instead of \eqref{eq_surjective_1} and \eqref{eq_surjective_2}. The sesquilinear form $B$ and the functional
$\Lambda$ are now defined by $B\left(  u,\varphi\right)  :=\left\langle
u,\varphi\right\rangle _{\mathcal{V}}$ and%
\[
\Lambda\left(  \varphi\right)  :=\left\langle g_{1},\varphi_{1}\right\rangle
_{L^{2}\left(  \Omega_{1}\right)  }+\rho\left\langle \nabla f_{1}%
,\nabla\varphi_{1}\right\rangle _{L^{2}\left(  \Omega_{1}\right)  ^{2}%
}+\left\langle g_{2},\varphi_{2}\right\rangle _{L^{2}\left(  \Omega
_{2}\right)  }+\beta\left\langle f_{2},\varphi_{2}\right\rangle _{L^{2}\left(
\Omega_{2}\right)  }%
\]
for $u=\left(  u_{1},u_{2}\right)  $, $\varphi=\left(  \varphi_{1},\varphi
_{2}\right)  \in\mathcal{V}$. The Riesz Representation Theorem implies that
there exists a unique solution $u=\left(  u_{1},u_{2}\right)  \in\mathcal{V}$
satisfying%
\begin{equation}
B\left(  u,\varphi\right)  =\Lambda\left(  \varphi\right)  \qquad\text{for all
}\varphi\in\mathcal{V}.\label{ec3 ZeroInResolvent}%
\end{equation}
In particular, choosing $\left(  \varphi_{1},\varphi_{2}\right)  \in
C_{0}^{\infty}\left(  \Omega_{1}\right)  \times C_{0}^{\infty}\left(
\Omega_{2}\right)  \subset\mathcal{V}$ we see that (\ref{ec1 ZeroInResolvent})
and (\ref{ec2 ZeroInResolvent}) hold in the sense of distributions in
$\Omega_{1}$ and $\Omega_{2},$ respectively. As the right-hand side of
(\ref{ec1 ZeroInResolvent}) belongs to $L^{2}\left(  \Omega_{1}\right)  $, the
same holds for the left-hand side, i.e. $\Delta^{2}u_{1}\in L^{2}\left(
\Omega_{1}\right)  $. In the same way, we see that (\ref{ec2 ZeroInResolvent})
holds as equality in $L^{2}\left(  \Omega_{2}\right)  $ and therefore $\Delta
u_{2}\in L^{2}\left(  \Omega_{2}\right)  $. Now, set%
\begin{equation}
v_{i}:=-f_{i}\text{ \ \ \ for }i=1,2.\label{ec4 ZeroInResolvent}%
\end{equation}
Then $U:=\left(  u_{1},v_{1},u_{2},v_{2}\right)  ^{\top}\in\HH$ and%
\begin{equation}
\Delta^{2}u_{1}-\rho\Delta v_{1}=g_{1}\text{,}\qquad-\Delta u_{2}+\beta
v_{2}=g_{2}.\label{ec5 ZeroInResolvent}%
\end{equation}
In the same way as in the proof of Theorem~\ref{thm:wellposed}, one sees that $U$ satisfies the transmission conditions weakly.
Therefore $U$ beolongs to $D(\AA)$ and satisfies $-\AA U=F$.

On other hand, if $\tilde{U}\in D(\AA)$ solves $-\AA\tilde{U}=F$, then $B\left(  \tilde{u},\varphi\right)  = \Lambda\left(\varphi\right)$
holds for all $\varphi\in\mathcal{V}$ due to the definition
of $D(\AA)$ and the weak transmission conditions. Therefore
$U=\tilde{U}$, and $\AA$ is a bijection. Since $\AA$ is the
generator of a $C_{0}-$semigroup by Theorem~\ref{thm:wellposed}, $\AA$ is closed and hence
$0\in\rho\left(\AA\right)  $.
\end{remark}

\section{Results on exponential stability}\label{expstab}

In this section, we study exponential stability of the semigroup $(S(t))_{t\ge 0}$ generated by $\AA$. First, we consider the case where we have damping in both sub-domains, i.e., $\rho>0$ and $\beta>0$. It is no surprise that in this case exponential stability holds.

\begin{theorem}
  \label{3.1}
  Let $\rho>0$ and $\beta > 0$. Then the semigroup $(S(t))_{t\ge 0}$ is exponentially stable, i.e., for any $U_0\in D(\AA)$ and $U(t) := S(t) U_0\;(t\ge 0)$ we have $E(t)\le Ce^{-\kappa t}E(0)$ with positive constants $C$ and $\kappa$, where $E(t) := \frac 12 \|U(t)\|_{\HH}^2$.
\end{theorem}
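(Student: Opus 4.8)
The plan is to invoke the Gearhart--Pr\"uss theorem: since $(S(t))_{t\ge 0}$ is a contraction (hence bounded) semigroup on the Hilbert space $\HH$, it is exponentially stable if and only if
\[
 i\R\subset\rho(\AA)\qquad\text{and}\qquad \sup_{s\in\R}\big\|(is-\AA)^{-1}\big\|_{\mathcal L(\HH)}<\infty .
\]
Once this is established, one has $\|S(t)\|_{\mathcal L(\HH)}\le Me^{-\omega t}$ for some $M\ge 1$, $\omega>0$, and the asserted energy decay follows at once from $E(t)=\tfrac12\|S(t)U_0\|_\HH^2\le M^2e^{-2\omega t}E(0)$, with $\kappa=2\omega$ and $C=M^2$.

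For the spectral condition $i\R\subset\rho(\AA)$ I would first note that, by the elliptic regularity of Theorem~\ref{4.5} and Rellich's theorem, the embedding $D(\AA)\hookrightarrow\HH$ is compact; together with $0\in\rho(\AA)$ (established in the remark above) this shows that $\AA$ has compact resolvent, so $\sigma(\AA)$ consists of isolated eigenvalues and it suffices to exclude purely imaginary ones. If $\AA U=isU$ with $s\in\R$ and $U\ne 0$, then \eqref{dissieq} gives $\rho\|\nabla v_1\|_{L^2(\Om_1)^2}^2+\beta\|v_2\|_{L^2(\Om_2)}^2=\Re\langle\AA U,U\rangle_\HH=0$, and since $\rho,\beta>0$ we obtain $\nabla v_1=0$ and $v_2=0$. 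As $v_1\in H^2_{\Ga}(\Om_1)$ vanishes on $\Ga$, Poincar\'e's inequality forces $v_1=0$; reading off $v_1=isu_1$ and $v_2=isu_2$ from the eigenvalue equation then yields $u_1=u_2=0$ when $s\ne 0$, while $s=0$ is already excluded by $0\in\rho(\AA)$. Hence $\AA$ has no spectrum on $i\R$.

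The uniform resolvent bound is the heart of the matter, and I would argue by contradiction: assume there exist $s_n\in\R$ and $U_n=(u_1^n,v_1^n,u_2^n,v_2^n)^\top\in D(\AA)$ with $\|U_n\|_\HH=1$ and $F_n:=(is_n-\AA)U_n\to 0$ in $\HH$. Pairing with $U_n$ and taking real parts, \eqref{dissieq} gives $\rho\|\nabla v_1^n\|^2+\beta\|v_2^n\|^2=\Re\langle F_n,U_n\rangle_\HH\to 0$, so $\nabla v_1^n\to 0$ and $v_2^n\to 0$ in $L^2$, and Poincar\'e yields $v_1^n\to 0$ in $L^2(\Om_1)$. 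If $(s_n)$ had a bounded subsequence, then $(U_n)$ would be bounded in the graph norm, and compactness of the resolvent together with the closedness of $\AA$ would produce a nonzero purely imaginary eigenvector in the limit, contradicting the previous step; hence we may assume $|s_n|\to\infty$. The first and third resolvent equations read $v_i^n=is_nu_i^n-f_i^n$, so that $s_nu_i^n\to 0$ in $L^2$ for $i=1,2$.

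It remains to transfer this smallness to the undamped potential energies $\|u_1^n\|_{H^2_{\Ga}(\Om_1)}$ and $\|\nabla u_2^n\|_{L^2(\Om_2)}$, and here the coupling across the interface $I$ is the genuine obstacle. Rather than estimating the interface traces by hand, I would feed the \emph{weak} transmission identity \eqref{weak-tm} with the admissible test vector $\Phi=(0,u_1^n,0,u_2^n)^\top$ --- admissible precisely because $u_1^n=u_2^n$ on $I$ --- which collapses $\langle\AA U_n,\Phi\rangle_\HH$ to $-\|u_1^n\|_{H^2_{\Ga}(\Om_1)}^2-\|\nabla u_2^n\|_{L^2(\Om_2)}^2-\rho\langle\nabla v_1^n,\nabla u_1^n\rangle-\beta\langle v_2^n,u_2^n\rangle$, so that all interface contributions disappear automatically. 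Writing the left-hand side as $is_n\langle U_n,\Phi\rangle_\HH-\langle F_n,\Phi\rangle_\HH$ and using $s_nu_i^n\to 0$, $\nabla v_1^n\to 0$, $v_2^n\to 0$ and $F_n\to 0$, every surviving term tends to zero, whence $\|u_1^n\|_{H^2_{\Ga}(\Om_1)}^2+\|\nabla u_2^n\|_{L^2(\Om_2)}^2\to 0$. Combined with $v_1^n,v_2^n\to 0$ this gives $\|U_n\|_\HH\to 0$, contradicting $\|U_n\|_\HH=1$. The anticipated difficulty is exactly this interface coupling; the decisive point is that the transmission conditions are already encoded in \eqref{weak-tm}, so the special test function makes the boundary terms on $I$ cancel without any trace estimates.
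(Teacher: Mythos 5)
Your proof is correct, but it takes a genuinely different route from the paper. The paper argues entirely in the time domain: it introduces the auxiliary functional $F(t)=\langle u_1(t),v_1(t)\rangle_{L^2(\Omega_1)}+\langle u_2(t),v_2(t)\rangle_{L^2(\Omega_2)}$, shows $|F(t)|\le c_1E(t)$, computes $F'(t)$ by testing the weak transmission identity \eqref{weak-tm} with $\Phi=(0,u_1(t),0,u_2(t))^\top$, and then closes a differential inequality $L'(t)\le-\kappa L(t)$ for the perturbed energy $L=c_5E+F$ via Gronwall. You instead work in the frequency domain via Gearhart--Pr\"uss, excluding imaginary spectrum through dissipativity plus compactness of the resolvent, and obtaining the uniform resolvent bound by a contradiction argument. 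Notably, the decisive computation is the same in both proofs: testing \eqref{weak-tm} with $(0,u_1,0,u_2)^\top$ so that all interface terms cancel and the potential energies $\|u_1\|_{H^2_\Gamma(\Omega_1)}^2+\|\nabla u_2\|_{L^2(\Omega_2)^2}^2$ appear with a sign. What the paper's approach buys is self-containedness and (in principle) explicit constants: it needs nothing beyond the generation theorem and Poincar\'e. Your approach buys structural uniformity with Section~5 of the paper (where Borichev--Tomilov is used in exactly this resolvent-estimate style), at the price of importing the higher regularity of Theorem~\ref{4.5} to get compactness of $D(\AA)\hookrightarrow\HH$ --- a forward reference in the paper's ordering, though not a circular one, since Section~4 is independent of Theorem~\ref{3.1}; the paper itself invokes compactness of $\AA^{-1}$ in the same way in Section~5. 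Two minor remarks: your case distinction on the boundedness of $(s_n)$ is actually unnecessary, since $is_nu_i^n=v_i^n+f_i^n\to0$ in $L^2$ holds regardless of the size of $s_n$, so the test-function argument covers all frequencies at once; and to make the compact-embedding claim fully rigorous you should note that the quantitative bound $\|u_1\|_{H^4(\Omega_1)}+\|u_2\|_{H^2(\Omega_2)}\le C\|U\|_{D(\AA)}$ follows from the a priori estimates of Corollary~\ref{4.3} and Remark~\ref{4.4} (or from the closed graph theorem), not merely from the qualitative statement of Theorem~\ref{4.5}.
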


\begin{proof}
  Let $U(t) = (u_1(t),v_1(t),u_2(t),v_2(t))^\top = S(t)U_0$ with $U_0\in D(\AA)$. For the energy $E(t)$ we obtain
  \begin{equation}
    \label{3-1}
    E'(t) = \Re\langle \AA U(t), U(t)\rangle_{\HH} = -\rho \|\nabla v_1(t)\|_{L^2(\Omega_1)^2}^2 - \beta \|v_2(t)\|_{L^2(\Omega_2)}^2.
  \end{equation}
  We define $F(t) := \langle u_1(t),v_1(t)\rangle_{L^2(\Omega_1)} + \langle u_2(t), v_2(t)\rangle _{L^2(\Omega_2)}$ for $t\ge 0$. Then
  \[ |F(t)| \le \tfrac12 \Big( \|u_1(t)\|_{L^2(\Omega_1)}^2 + \|v_1(t)\|_{L^2(\Omega_1)}^2 +\|u_2(t)\|_{L^2(\Omega_2)}^2 + \|v_2(t)\|_{L^2(\Omega_2)}^2\Big).\]
  By definition of $\HH$, we have $u_1(t)=u_2(t)$ on the interface $I$, and therefore the function $u(t):= u_1(t)\chi_{\Omega_1} + u_2(t)\chi_{\Omega_2}$
  belongs to $H^1_0(\Omega)$ for all $t \geq 0$. An application of  Poincar\'{e}'s     inequality yields
  \begin{align*}
    \|u_1(t)\|_{L^2(\Omega_1)}^2 + \|u_2(t)\|_{L^2(\Omega_2)}^2 & = \|u(t)\|_{L^2(\Omega)} ^2 \le
    C\|\nabla u(t)\|_{L^2(\Omega)^2}^2 \\
    & =C\Big( \|\nabla u_1(t)\|_{L^2(\Omega_1)^2}^2 + \|\nabla u_2(t)\|_{L^2(\Omega_2)^2}^2\Big)\\
    & \le C\Big( \|u_1(t)\|_{H^2_\Gamma(\Om_1)}^2 + \|\nabla u_2(t)\|_{L^2(\Omega_2)^2}^2\Big).
  \end{align*}
  Therefore, for some constant $c_1>0$ we get
  \begin{equation}
    \label{3-2}
    |F(t)|\le \frac{c_1}2 \|U(t)\|_{\HH}^2 = c_1 E(t).
  \end{equation}
  Using $U'(t) = \AA U(t)$, we obtain
  \begin{align*}
    F'(t) & = \langle u_1'(t),v_1(t)\rangle_{L^2(\Omega_1)} + \langle u_1(t),v_1'(t)\rangle_{L^2(\Omega_1)} \\
    & \quad + \langle u_2'(t), v_2(t)\rangle_{L^2(\Omega_2)} + \langle u_2(t),v_2'(t)\rangle_{L^2(\Omega_2)}\\
    & = \|v_1(t)\|_{L^2(\Omega_1)}^2 - \langle u_1(t),\Delta^2 u_1(t)-\rho\Delta v_1(t) \rangle_{L^2(\Omega_1)} \\
    & \quad+ \|v_2(t)\|_{L^2(\Omega_2)}^2 + \langle u_2(t),\Delta u_2(t)-\beta v_2(t) \rangle_{L^2(\Omega_2)}.
  \end{align*}
  Now we use the fact that $U(t)\in D(\AA)$ and take $\Phi:= (0, u_1(t), 0, u_2(t))^\top$ in the weak transmission conditions \eqref{weak-tm}. We obtain
  \begin{align*}
    F'(t) & = \|v_1(t)\|_{L^2(\Omega_1)}^2 + \|v_2(t)\|_{L^2(\Omega_2)}^2 + \langle \Phi, \AA U(t)\rangle_{\HH} \\
    & = \|v_1(t)\|_{L^2(\Omega_1)}^2 + \|v_2(t)\|_{L^2(\Omega_2)}^2 - \|u_1(t)\|_{H^2_\Gamma(\Om_1)}^2 - \|\nabla u_2(t)\|_{L^2(\Omega_2)^2}^2 \\
    & \quad -\rho \langle \nabla u_1(t), \nabla v_1(t)\rangle_{L^2(\Omega_1)^2} - \beta \langle u_2(t), v_2(t)\rangle_{L^2(\Omega_2)}.
  \end{align*}
  By Young's inequality and Poincar\'{e}'s inequality in $\Omega_1$, for every $\delta>0$ there exists a $C_\delta>0$ such that
  \begin{align}
  -\rho\langle \nabla u_1(t),\nabla v_1(t)\rangle_{L^2(\Omega_1)^2} & \le \rho\delta \|\nabla u_1(t)\|_{L^2(\Omega_1)^2}^2 + \rho C_\delta \|\nabla v_1(t)\|_{L^2(\Omega_1)^2}^2 \nonumber\\
  & \le c_2 \rho\delta \|\nabla^2 u_1(t)\|_{L^2(\Omega_1)^2}^2 + \rho C_\delta \|\nabla v_1(t)\|_{L^2(\Omega_1)^2}^2 \nonumber\\
  & \le c_3 \rho\delta \|u_1(t)\|_{H^2_\Gamma(\Om_1)}^2 + \rho C_\delta \|\nabla v_1(t)\|_{L^2(\Omega_1)^2}^2.\label{2-3}
  \end{align}
  In the same way, \md{using Poincar\'{e}'s inequality in $\Omega$},
  \begin{align}\label{2-4}
  \begin{split}
    -\beta \langle u_2(t),v_2(t)\rangle_{L^2(\Omega_2)} & \le \beta \delta \|u_2(t)\|_{L^2(\Omega_2)}^2 + \beta C_\delta \|v_2(t)\|_{L^2(\Omega_2)}^2\\
    & \md{\le \tilde{c_3}\beta\delta\big( \|u_1(t)\|_{H^2_\Gamma(\Om_1)}^2 + \| \nabla u_2(t) \|_{L^2(\Om_2)^2}^2\big)}\\
    & \qquad \md{+ \beta C_\delta \|v_2(t)\|_{L^2(\Omega_2)}^2 }.
    \end{split}
  \end{align}
  Choosing $\delta$ small enough such that \md{$(c_3\rho + \tilde{c_3}\beta)\delta\le\frac12$}, we get from \eqref{2-3} and \eqref{2-4} (again using Poincar\'{e}'s inequality for $v_1(t)$ in $\Omega_1$)
  \begin{align}
    F'(t) & \le c_4 \big( \|\nabla v_1(t)\|_{L^2(\Omega_1)^2}^2 + \|v_2(t)\|_{L^2(\Omega_2)}^2\big) \nonumber\\
    & \quad - \tfrac12 \big( \|u_1(t)\|_{H^2_\Gamma(\Om_1)}^2 + \|\nabla u_2(t)\|_{L^2(\Omega_2)^2}^2\big). \label{2-5}
  \end{align}
  Now let $L(t) := c_5 E(t) + F(t)$, where the constant $c_5$ satisfies $c_5\ge2 c_1$ and $\min\{\rho,\beta\} c_5 \ge c_4+\frac 12$. By \eqref{3-1} and \eqref{2-5} we see that
  \begin{equation}\label{2-6}
  \begin{aligned}
   L'(t) & \le -\frac12\Big( \|u_1(t)\|_{H^2_\Gamma(\Om_1)}^2 + \|\nabla v_1(t)\|_{L^2(\Omega_1)^2}^2 \\
   & \quad + \|\nabla u_2(t)\|_{L^2(\Omega_2)^2}^2 + \|v_2(t)\|_{L^2(\Omega_2)}^2\Big)\\
   & \le - C E(t).
   \end{aligned}
   \end{equation}
 As $|F(t)|\le c_1 E(t) \le \frac{c_5}2 E(t)$, we obtain
 \[ \frac{c_5}2 E(t) \le L(t) \le \frac{3c_5}2 E(t).\]
 Therefore, \eqref{2-6} yields $L'(t) \le -\kappa L(t)$ with some positive constant $\kappa$. By Gronwall's lemma, $L(t) \le e^{-\kappa t} L(0)$ which yields
 \[ E(t) \le C L(t) \le C e^{-\kappa t} L(0) \le C e^{-\kappa t} E(0).\]

 \vspace*{-1.5em}
\end{proof}

Now let us consider the case where the membrane is not damped, i.e., $\beta=0$. In this situation, we show that the system is not exponentially stable,  no matter if $\rh >0$ or $\rh=0$. The proof of the following theorem follows an idea of \cite[Theorem 3.5]{MunozRivera-Racke17}.

\begin{theorem}\label{3.2}
For $\beta=0$ and $\rho\ge 0$, the system is not exponentially stable.
\end{theorem}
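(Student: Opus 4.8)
The plan is to use the frequency-domain (Gearhart--Prüss/Huang) characterization of exponential stability for contraction semigroups on Hilbert spaces: since $(S(t))_{t\ge 0}$ is a contraction semigroup, it is exponentially stable if and only if $i\R\subset\rho(\AA)$ and $\sup_{\la\in\R}\|(i\la-\AA)^{-1}\|<\infty$. To refute exponential stability it therefore suffices to exhibit a sequence $\la_n\in\R$ with $\la_n\to\infty$ and a sequence $U_n\in D(\AA)$ with $\inf_n\|U_n\|_{\HH}>0$ such that $\|(i\la_n-\AA)U_n\|_{\HH}=o(\|U_n\|_{\HH})$; this forces $\|(i\la_n-\AA)^{-1}\|\to\infty$ and hence contradicts the uniform resolvent bound. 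Following the idea of \cite[Theorem~3.5]{MunozRivera-Racke17}, the $U_n$ will be approximate eigenfunctions whose energy concentrates in the undamped membrane domain $\Om_2$, where (because $\beta=0$) no dissipation is available to damp high-frequency oscillations.

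Concretely, I would let $(\phi_n,\la_n^2)$ denote the Dirichlet eigenpairs of $-\Delta$ on $\Om_2$, normalized by $\|\phi_n\|_{L^2(\Om_2)}=1$ with $\la_n\to\infty$, and take the membrane components to be the exact eigenmode $u_2^n:=\phi_n$, $v_2^n:=i\la_n\phi_n$. With the ansatz $v_1^n:=i\la_n u_1^n$ in the plate, the first and third components of $(i\la_n-\AA)U_n$ vanish identically, and since $-\Delta\phi_n=\la_n^2\phi_n$ the fourth component vanishes as well; thus the residual is supported entirely in $\Om_1$ and equals the plate residual $R_n:=\Delta^2 u_1^n-i\la_n\rho\,\Delta u_1^n-\la_n^2 u_1^n$. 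It remains to choose the corrector $u_1^n$ so that $U_n=(u_1^n,i\la_n u_1^n,\phi_n,i\la_n\phi_n)^\top$ actually lies in $D(\AA)$: this requires $u_1^n=\phi_n=0$ on $I$, the clamped conditions on $\Ga$, and the weak transmission conditions \eqref{eq_tmc_1}, \eqref{eq_tmc_2}, the latter forcing the third-order interface relation $\BB_2 u_1^n-i\la_n\rho\,\partial_\nu u_1^n=-\partial_\nu\phi_n$ on $I$. I would construct $u_1^n$ as a boundary layer of width $\sim\la_n^{-1/2}$ localized near $I$ that realizes these data, using the parameter-dependent elliptic theory of Agranovich--Vishik \cite{Agranovich-Vishik64} for the operator $\Delta^2-i\la\rho\,\Delta-\la^2$, i.e. the same machinery as in Section~4.

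The quantitative heart of the argument is then a size estimate. Using a Rellich-type identity one has $\|\partial_\nu\phi_n\|_{L^2(I)}=O(\la_n^{1/2})$, and the parameter-elliptic a priori estimates give the corrector amplitude of lower order; a computation of the boundary-layer norms should then yield plate energy $\|u_1^n\|_{H^2_\Ga(\Om_1)}^2+\la_n^2\|u_1^n\|_{L^2(\Om_1)}^2=o(\la_n^2)$ together with the residual bound $\|R_n\|_{L^2(\Om_1)}=o(\la_n)$. Since the membrane part contributes $\|\nabla u_2^n\|_{L^2(\Om_2)}^2+\|v_2^n\|_{L^2(\Om_2)}^2=2\la_n^2$, one obtains $\|U_n\|_{\HH}\sim\sqrt2\,\la_n$ while $\|(i\la_n-\AA)U_n\|_{\HH}=\|R_n\|_{L^2(\Om_1)}=o(\la_n)$, which is exactly the required $o(\|U_n\|_{\HH})$; the case $\rho=0$ is handled identically (and more easily), the plate operator then being $\Delta^2-\la^2$. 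The main obstacle is precisely this last step: fitting $u_1^n$ into $D(\AA)$, that is, satisfying all three interface conditions simultaneously — which over-determines a pure boundary layer and hence leaves an unavoidable interior residual — while still keeping $R_n$ of strictly lower order than $\la_n$. This is where the parameter-dependent estimates must be exploited carefully, and it is the only part of the proof that is not a routine verification.
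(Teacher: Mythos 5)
Your strategy (Gearhart--Pr\"uss plus a sequence of quasimodes concentrated in the undamped membrane) is a legitimate route in principle, but as written the proof has a genuine gap at exactly the step you flag as ``the main obstacle'': the construction of the plate corrector $u_1^n$ is never carried out, and it is not a routine verification that can be deferred. For $U_n$ to lie in $D(\AA)$ you must satisfy \emph{three} conditions on $I$ simultaneously ($u_1^n=0$, $\BB_1u_1^n=0$, and $\BB_2u_1^n-i\la_n\rho\,\partial_\nu u_1^n=-\partial_\nu\phi_n$) in addition to the clamped conditions on $\Ga$, while keeping the interior residual $R_n$ of order $o(\la_n)$. A pure boundary layer built from the stable roots of $\tau^4-i\la\rho\tau^2\cdots$ has only a two-dimensional stable subspace against these three interface conditions, and the corresponding exact boundary value problem for $\Delta^2-i\la\rho\Delta-\la^2$ with three conditions on $I$ is over-determined, so one cannot simply invoke Agranovich--Vishik solvability either: dropping a boundary condition puts the error into the interface conditions rather than the interior, which destroys membership in $D(\AA)$ and invalidates the computation of $(i\la_n-\AA)U_n$. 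Repairing this requires a genuinely new idea (e.g.\ a multi-term corrector, or perturbing $u_2^n$ away from the exact eigenmode), and none is supplied. Moreover, the one quantitative input you do assert is wrong: with $\|\phi_n\|_{L^2(\Om_2)}=1$ the Rellich identity gives $\int_I(q\cdot\nu)|\partial_\nu\phi_n|^2\,dS=2\la_n^2$, so $\|\partial_\nu\phi_n\|_{L^2(I)}\asymp\la_n$, not $O(\la_n^{1/2})$; the interface forcing on the plate is therefore \emph{not} small relative to the mode, and all of your subsequent order-of-magnitude claims for the corrector and the residual would have to be redone from this corrected starting point.

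For comparison, the paper avoids this construction entirely. It compares $S(t)$ with the free wave semigroup $\widetilde S(t)$ generated on $\HH_0=\{0\}\times\{0\}\times H^1_0(\Om_2)\times L^2(\Om_2)$, derives the energy identity
\[
E(t)+\rho\int_0^t\|\nabla u_t(s)\|_{L^2(\Om_1)^2}^2\,{\rm d}s=\int_0^t\Re\big(\langle\partial_\nu\widetilde w(s),u_t(s)\rangle_{L^2(I)}\big)\,{\rm d}s
\]
for the difference, and uses the Aubin--Lions lemma and trace compactness to show that $S(t)-\widetilde S(t)\colon\HH_0\to\HH$ is compact; by \cite[Theorem~3.3]{MunozRivera-Racke17} the essential spectral radius of $S(t)$ then equals that of the undamped wave semigroup, namely $1$, which rules out exponential stability for every $\rho\ge 0$ at once. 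If you want to salvage your frequency-domain approach, you would need to either complete the corrector construction with full parameter-dependent estimates (including verifying parameter-ellipticity of the $\la$-dependent boundary operator $\BB_2-i\la\rho\,\partial_\nu$), or switch to the paper's compactness argument.
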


\begin{proof}
 We consider the closed subspace
 \[
  \HH_0:=\{0\}\times \{0\}\times H^1_0(\Om_2)\times L^2(\Om_2)
 \]
 of $\HH$. On $\HH_0$ we consider the $C_0$-semigroup $(\widetilde S(t))_{t\geq 0}$ 
with the generator
 \[
  \widetilde \AA\colon \HH_0\supset D(\widetilde \AA)\to \HH_0,\quad U\mapsto \left(\begin{array}{cccc}
  	1& 0 & 0 & 0\\
    0 & 1 & 0 & 0\\
    0 & 0 & 0 & 1\\
    0 & 0 & \De & 0     \end{array}\right)U
 \]
 where $D(\widetilde \AA):=\{0\}\times \{0\}\times (H^2(\Om_2)\cap H_0^1(\Om_2))\times H^1_0(\Om_2)$. In the sequel, we will show that $S(t)-\widetilde S(t)\colon \HH_0\to \HH$ is compact.
For $U_0 \in \HH_0$, we consider
\[
 	E(t):=\frac{1}{2}\big\|S(t)U_0-\widetilde S(t)U_0\big\|_{\HH}^2
\]
for $t\geq 0$. Then, we denote by $(u,u_t,w,w_t)^\top:=S(t)U_0$ the solution of the transmission problem \eqref{eq_om_1}--\eqref{eq_tmc_2}  and
$(0,0,\widetilde w,\widetilde w_t)^\top:=\widetilde S(t)U_0$ the solution of the wave equation in $\Omega_2$ with homogeneous Dirichlet boundary conditions. Then $z:=w-\widetilde w$ solves the wave equation $z_{tt}-\De z=0$ in $\Om_2$ with $z|_I=w|_I=u|_I$. Therefore, applying the weak transmission conditions to $$\langle AS(t)U_0, S(t)U_0-\widetilde{S}(t)U_0\rangle_{\HH}$$ and using integration by parts for $\langle \Delta \widetilde w(t), z_t(t)\rangle_{L^2(\Om_2)}$, we obtain
\begin{align*}
 	E'(t)
 		& = \Re\Big(\langle u(t),u_t(t)\rangle_{H^2_\Ga(\Om_1)}+\langle u_t(t),u_{tt}(t)\rangle_{L^2(\Om_1)} \\
 			& \qquad \quad + \langle\nabla z(t),\nabla z_t(t) \rangle_{L^2(\Om_2)^2}+\langle z_t(t),z_{tt}(t)\rangle_{L^2(\Om_2)}\Big)\\
  		& = \Re\Big(\langle u(t),u_t(t)\rangle_{H^2_\Ga(\Om_1)}
  			+ \langle u_t(t),-\De^2 u(t) +\rh \De u_t(t)\rangle_{L^2(\Om_1)} \\
  			& \qquad \quad -\langle\partial_\nu z(t), z_t(t) \rangle_{L^2(I)} +\langle \BB_2 u(t)+\rh \partial_\nu u_t(t),u_t(t)\rangle_{L^2(I)} \Big)\\
  		& = -\rh\|\nabla u_t(t)\|_{L^2(\Om_1)^2}^2+ \Re(\langle\partial_\nu \widetilde w(t), u_t(t) \rangle_{L^2(I)}).
 \end{align*}
 This implies that
 \begin{equation}\label{notexp1}
  E(t)+\int_0^t \rh\|\nabla u_t(s)\|_{L^2(\Om_1)^2}^2\, {\rm d}s = \int_0^t\Re( \langle\partial_\nu \widetilde w(s), u_t(s)  \rangle_{L^2(I)})\, {\rm d}s.
 \end{equation}
 Now, let $(U_0^k)_{k\in \N}\subset \HH_0$ be a bounded sequence. We define $\widetilde w^k$ and $u^k$  as $\widetilde w$ and $u$ but with $U_0$ being replaced by $U^k_0$ for $k\in \N$. Then, as the sequence $\big(\partial_\nu \widetilde w^k\big)_{k\in \N}\subset L^2\big([0,t];L^2(I)\big)$ is uniformly bounded, there exists a subsequence of $(\widetilde w_k)_{k\in \N}$ which will again be denoted by $(\widetilde w_k)_{k\in \N}$ such that $\big(\partial_\nu \widetilde w^k\big)_{k\in \N}$ converges weakly in $L^2\big([0,t];L^2(I)\big)$. Moreover, the sequences $(u_t^k)_{k\in \N}\subset L^2\big([0,t];H^2(\Om_1)\big)$ and $(u_{tt}^k)_{k\in \N}\subset L^2\big([0,t];L^2(\Om_1)\big)$ are both uniformly bounded. By the Aubin-Lions Lemma, there exists a subsequence of $(u^k)_{k\in \N}$, which will again be denoted by $(u^k)_{k\in \N}$ such that $(u^k_t)_{k\in \N}\subset L^2\big([0,t];H^1(\Om_1)\big)$ converges. As the trace
 \[
  H^1(\Om_1)\to L^2(I),\quad v\mapsto v|_I
 \]
 is continuous, we obtain that $(u_t^k)_{k\in \N}\subset L^2\big([0,t];L^2(I)\big)$ is convergent.
 For $k,l\in \N$ we now denote by
 \[
  E^{kl}(t):=\frac{1}{2}\big\|S(t)(U_0^k-U^l_0)-\widetilde S(t)(U_0^k-U^l_0)\big\|_{\HH}^2.
 \]
 Then, by \eqref{notexp1} we get that
 \[
  E^{kl}(t)\leq \int_0^t\big\langle\partial_\nu \widetilde w^{kl}(s), u^{kl}_t(s) \big\rangle_{L^2(I)}\, {\rm d}s=\big\langle\partial_\nu \widetilde w^{kl}, u^{kl}_t\big\rangle_{L^2\big([0,t];L^2(I)\big)}\to 0
 \]
 as $k,l\to \infty$, where $\widetilde w^{kl}$ and $u^{kl}$ are defined as $\widetilde w$ and $u$ but with $U_0$ being replaced by $U^k_0-U_0^l$ for $k,l\in \N$.
 Therefore, $((S(t)-\tilde S(t))U_0^k)_{k\in\N}$ is a Cauchy sequence in $\HH$ and thus convergent. This shows the compactness of $S(t)-\tilde S(t)\colon \HH_0\to \HH$.
 As $\tilde S(t)$ is the semigroup related to the wave equation, its essential spectral radius equals 1. An application of \cite[Theorem~3.3]{MunozRivera-Racke17} gives that the essential spectral radius of $S(t)$ equals 1, too, and thus $(S(t))_{t\ge 0}$ is not exponentially stable.
\end{proof}

\section{Higher regularity}

In this section, we show that the functions in the domain of $\AA$ have higher regularity, which implies that the transmission conditions hold in the strong sense of traces. For this, we need some results from the theory of parameter-elliptic boundary value problems developed in the 1960's (\cite{Agranovich-Vishik64}, see also \cite{Agranovich-Denk-Faierman97}). Let $\Omega\subset\R^2$ be a domain, and let $A(D) = \sum_{|\alpha|\le 2m} a_\alpha \partial^\alpha$ be a linear differential operator in $\Omega$ of order $2m$. Then $A(D)$ is called parameter-elliptic if  the principal symbol $A(i\xi) := \sum_{|\alpha|=2m} a_\alpha (i\xi)^\alpha$ satisfies
\[ \lambda-A(i\xi) \not=0\quad (\Re\lambda\ge 0,\, \xi\in\R^2,\, (\lambda,\xi)\not=0).\]
Let  $B_1(D),\dots, B_m(D)$ be linear boundary operators on $\partial\Omega$ of the form $B_j(D) = \sum_{|\beta|\le m_j} b_{j\beta} \partial^\beta$ of order $m_j<2m$ with principal symbols $B_j(i\xi) := \sum_{|\beta|=m_j} b_{j\beta}(i\xi)^\beta$. Then we say that the boundary value problem is parameter-elliptic if $A(D)$ is parameter-elliptic and  if the following Shapiro-Lopatinskii condition holds:

\textbf{(SL)} Let $x_0\in\partial\Omega$, and rewrite the boundary value problem $(A(D),$ $B_1(D),\dots, B_m(D))$ in  the coordinate system associated with $x_0$, which is obtained from the original one by a rotation after which the positive $x_2$-axis has the direction of the interior
normal to $\partial \Omega$ at $x_0$. Then the trivial solution $w=0$ is the only stable solution of the ordinary differential equation on the half-line
\begin{align*}
  \md{( \lambda -} A(i\xi_1, \partial_2 )\md{)} w(x_2) & = 0 \quad (x_2\in (0,\infty)),\\
  B_j(i\xi_1,\partial_2) w(0) & = 0 \quad (j=1,\dots,m)
\end{align*}
 for all $\xi_1\in\R$ and $\Re\lambda\ge 0$ with $(\xi_1,\lambda)\not=0$.

It was shown in \cite{Agranovich-Vishik64} that the operator corresponding to a parameter-elliptic boundary value problem generates an analytic $C_0$-semigroup in $L^2(\Omega)$. We will apply these results to $\Delta^2$ and $\Delta$ in $\Omega_1$ and $\Omega_2$, respectively, with different boundary operators.

\begin{lemma}
  \label{4.1}
  The operator $-\Delta^2$ in $\Omega_1$, supplemented with the boundary operators $\BB_1$ and $\BB_2$ on $\partial \Omega_1$, is parameter-elliptic. The same holds for $-\Delta^2$ with clamped boundary conditions $u=\partial_\nu u=0$ on $\partial\Omega_1$ and for $-\Delta^2$ with boundary conditions $u=\BB_1 u =0$ on $\partial \Omega_1$.
\end{lemma}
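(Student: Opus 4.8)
The plan is to verify the two ingredients in the definition of parameter-ellipticity: parameter-ellipticity of the operator $-\Delta^2$, and the Shapiro--Lopatinskii condition (SL) for each of the three choices of boundary operators. The operator part is immediate: the principal symbol of $-\Delta^2$ is $A(i\xi)=-|\xi|^4$, so $\lambda-A(i\xi)=\lambda+|\xi|^4$. For $\Re\lambda\ge0$ one has $\Re(\lambda+|\xi|^4)=\Re\lambda+|\xi|^4\ge0$, with equality only if $\Re\lambda=0$ and $\xi=0$, in which case $\lambda+|\xi|^4=\lambda$ vanishes only for $\lambda=0$. Since $(\lambda,\xi)\neq0$ is assumed, $\lambda+|\xi|^4\neq0$, and $-\Delta^2$ is parameter-elliptic.

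For (SL) I fix $x_0\in\partial\Omega_1$ and rotate coordinates so that the interior normal points in the positive $x_2$-direction; then $\tau=(1,0)^\top$ and $\partial_\nu=-\partial_2$ at $x_0$. Substituting $\partial_1\mapsto i\xi_1$ into the principal parts turns the model problem into $(\partial_2^2-\xi_1^2)^2 w=-\lambda w$ on $(0,\infty)$, together with the boundary conditions at $x_2=0$. Inspecting the characteristic equation $(\kappa^2-\xi_1^2)^2=-\lambda$ shows that for $\Re\lambda\ge0$ and $(\lambda,\xi_1)\neq0$ no root $\kappa$ is purely imaginary, so exactly two roots satisfy $\Re\kappa<0$ and the space of stable (exponentially decaying) solutions is two-dimensional. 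For completeness I would record that the principal symbols of the boundary operators are $\partial_2^2-\mu\xi_1^2$ for $\BB_1$ and $-\partial_2^3+(2-\mu)\xi_1^2\partial_2$ for $\BB_2$, although the argument below does not use them explicitly.

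Rather than evaluate the Lopatinskii determinant, I would argue by energy, exploiting that all three boundary conditions annihilate the boundary terms of the Green formula. The frozen, constant-coefficient version of Lemma~\ref{Lemma-Green-formula-bilaplace-mu} reads
\[
\int_0^\infty\big((\partial_2^2-\xi_1^2)^2 w\big)\,\overline w\,{\rm d}x_2 = a(w,w)-\BB_1 w(0)\,\overline{\partial_\nu w(0)}+\BB_2 w(0)\,\overline{w(0)},
\]
where $a(w,w)=\int_0^\infty\big[\mu\,|w''-\xi_1^2 w|^2+(1-\mu)\,(|w''|^2+2\xi_1^2|w'|^2+\xi_1^4|w|^2)\big]\,{\rm d}x_2\ge0$ is the principal part of $\langle\cdot,\cdot\rangle_{H^2_\Ga(\Om_1)}$ and the contributions at $x_2=\infty$ vanish because $w$ is stable. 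The boundary terms vanish in each case: for the free conditions since $\BB_1 w(0)=\BB_2 w(0)=0$; for the clamped conditions since $w(0)=\partial_\nu w(0)=0$; and for $w=\BB_1 w=0$ since $\BB_1 w(0)=0$ and $w(0)=0$. As $w$ solves the model equation, the left-hand side equals $-\lambda\|w\|_{L^2(0,\infty)}^2$, whence $-\lambda\|w\|^2=a(w,w)$. Taking real parts and using $\Re\lambda\ge0$ gives $0\le a(w,w)=-\Re\lambda\,\|w\|^2\le0$, so $a(w,w)=0$. Since $0<\mu<1$, this forces $w''\equiv0$, and a decaying affine function is identically zero; hence $w\equiv0$. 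This is exactly (SL), simultaneously for all three boundary-operator choices.

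The main obstacle is the bookkeeping behind the frozen Green formula: one has to check that each boundary term pairs with precisely the trace that the corresponding boundary condition kills, and that the symbol of $\langle\cdot,\cdot\rangle_{H^2_\Ga(\Om_1)}$ is the nonnegative form $a$ above; the latter rests on the fact that $\int_0^\infty(|\nabla^2 w|^2-|\Delta w|^2)\,{\rm d}x_2$ is a pure boundary term, which together with $0<\mu<1$ keeps both weights in $a$ positive. As a safeguard, the same result follows by computing the $2\times2$ Lopatinskii determinants from the principal symbols above: for the clamped and the $w=\BB_1 w=0$ conditions these are Vandermonde-type expressions proportional to $\kappa_2-\kappa_1$ and $\kappa_2^2-\kappa_1^2=(\kappa_2-\kappa_1)(\kappa_2+\kappa_1)$, which (for distinct stable roots) are nonzero because the roots have negative real part, so $\kappa_1+\kappa_2\neq0$; for the free conditions the determinant is nonzero for $\mu\in(0,1)$, as the case $\lambda=0$ (double root $\kappa=-|\xi_1|$) already reveals through the nonvanishing factors $1-\mu$ and $3+\mu$, and the coincident-root cases are in any event covered by the energy identity.
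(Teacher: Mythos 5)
Your proposal is correct and follows essentially the same route as the paper: verify parameter-ellipticity of $-\Delta^2$ via its symbol, pass to the half-line model problem, and check the Shapiro--Lopatinskii condition by the localized Green identity (the paper's \eqref{4-2}), observing that all boundary terms vanish for each of the three sets of boundary conditions. The only cosmetic difference is the final step --- you conclude $w''\equiv 0$ from the $(1-\mu)\|\partial_2^2 w\|^2$ term and use decay, while the paper splits into the cases $\xi_1\neq 0$ and $\xi_1=0$ --- and both are valid.
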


\begin{proof}
  Obviously, the operator $-\Delta^2$ with symbol $-(\xi_1^2+\xi_2^2)^2$ is parameter-elliptic. Let $x_0\in\partial\Omega_1$, and choose a coordinate system associated with $x_0$. Then the $x_1$-axis is in tangential direction, while the positive $x_2$-axis coincides with the inner normal direction. In these coordinates, we have to solve the ordinary differential equation
  \begin{equation}
    \label{4-1}
    \begin{aligned}
     \md{\big(} \lambda + (\partial_2^2-\xi_1^2 )^2 \md{\big)} w(x_2) & = 0\quad (x_2\in (0,\infty)),\\
      \md{(}\BB_1(i\xi_1,\partial_2) w\md{)}(0) & = 0 ,\\
      \md{(}\BB_2(i\xi_1,\partial_2) w\md{)}(0) & = 0.
    \end{aligned}
  \end{equation}
  By the definition of the boundary operators $\BB_1$ and $\BB_2$, we obtain the local symbols $\md{\BB_1(i\xi_1,\partial_2) w = (\partial_2^2-\mu\xi_1^2)w}$ and $\md{\BB_2(i\xi_1,\partial_2) w = \big(-\partial_2^3 + (2-\mu)\xi_1^2 \partial_2\big)w}$. Now we use the following identity for $w\in H^2((0,\infty))$,
which is  obtained by integration by parts in $(0,\infty)$:
  \begin{equation}
    \label{4-2}
    \begin{aligned}
      \langle (\partial_2^2 & -\xi_1^2)^2w,w\rangle_{L^2((0,\infty))} = \mu \|(\partial_2^{\md{2}}-\xi_1^2)w\|_{L^2((0,\infty))}^2 \\
      & + (1-\mu)\Big( \|\xi_1^{\md{2}} w\|_{L^2((0,\infty))}^2 +\|\partial_2^2 w\|_{L^2((0,\infty))}^2
      + 2\|\xi_1\partial_2 w\|_{L^2((0,\infty))}^2 \Big)\\
      & + \md{\big(} \BB_1(i\xi_1,\partial_2)\md{w}\md{\big)}(0)\, \bar{\partial_2\md{w}(0)}
      + \md{\big(} \BB_2(i\xi_1,\partial_2)\md{w}\md{\big)}(0)\,\bar{\md{w}(0)}.
    \end{aligned}
  \end{equation}
  Note that this can be seen as a localized version of \eqref{Eq-Green-formula-bilaplace-mu}.

  Let $w$ be a stable solution of \eqref{4-1}. We multiply the first line in \eqref{4-1} by $\bar{w(x_2)}$ and integrate over $x_2\in (0,\infty)$. Due to the boundary conditions, all boundary terms in \eqref{4-2} disappear, and we obtain
\begin{align*}
  0 & = \langle (\lambda+  (\partial_2^2-\xi_1^2 )^2\md{)} w, w\rangle_{L^2((0,\infty))} \\
  & = \lambda \|w\|_{L^2((0,\infty))}^2 + \mu \|(\partial_2^{\md{2}}-\xi_1^2)w\|_{L^2((0,\infty))}^2 \\
  & + (1-\mu)\Big( \|\xi_1^{\md{2}} w\|_{L^2((0,\infty))}^2 +\|\partial_2^2 w\|_{L^2((0,\infty))}^2
      + 2\|\xi_1\partial_2 w\|_{L^2((0,\infty))}^2 \Big).
\end{align*}
As $\Re\lambda\ge 0$ and $\mu\in (0,1)$, we can take the real part and obtain $\|\xi_1^{\md{2}} w\|_{L^2((0,\infty))}=0$ and therefore $w=0$ in the case $\xi_1\not=0$. If $\xi_1=0$, then $\lambda\not=0$, and we obtain $\lambda\|w\|^{\md{2}}_{L^2((0,\infty))}=0$ which again implies $w=0$. Therefore, the Shapiro-Lopatinskii condition (SL) holds.

The statement for the other combinations of boundary conditions follows exactly in the same way, as in all cases the boundary terms in \eqref{4-2} disappear.
\end{proof}

We will apply parameter-elliptic theory to a boundary value problem in $\Omega_1$ with clamped boundary conditions on $\Gamma$ and free boundary conditions on $I$. In the next lemma, we show that the resolvent of such boundary value problems with `mixed' boundary conditions exists and satisfies a uniform estimate.

\begin{lemma}
  \label{4.2} Consider the boundary value problem
  \begin{equation}
    \label{4-3}
    \begin{aligned}
      (\lambda+\Delta^2) u & = f \quad \text{ in }\Omega_1,\\
      u = \partial_\nu u & =0 \quad \text{ on }\Gamma,\\
      \BB_1 u =  \BB_2 u & = 0 \quad\text{ on }I.
    \end{aligned}
  \end{equation}
  Then there exists a $\lambda_0>0$ such that for all $\lambda\ge\lambda_0$ and for all $f\in L^2(\Omega_1)$ there exists a unique solution $u\in H^4(\Omega_1)$ of \eqref{4-3}. Moreover, for all $\lambda\ge \lambda_0$ the uniform a priori-estimate
  \begin{equation}
    \label{4-4}
    \|u\|_{H^4(\Omega_1)} + \lambda\,\|u\|_{L^2(\Omega_1)} \le C_1 \|f\|_{L^2(\Omega_1)}
  \end{equation}
  holds with a constant $C_1$ depending on $\lambda_0$ but not on $\lambda$ or $f$.
\end{lemma}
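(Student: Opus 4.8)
The plan is to obtain existence, uniqueness, and the a priori estimate \eqref{4-4} as a direct consequence of the classical resolvent theory for parameter-elliptic boundary value problems \cite{Agranovich-Vishik64}, feeding in the parameter-ellipticity already verified in Lemma~\ref{4.1}. The only feature of \eqref{4-3} that is not completely standard is that the boundary $\partial\Omega_1 = \Gamma \cup I$ consists of two disjoint components carrying different boundary operators: the clamped conditions $u = \partial_\nu u = 0$ on $\Gamma$ and the free conditions $\BB_1 u = \BB_2 u = 0$ on $I$.

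First I would note that the Shapiro-Lopatinskii condition (SL) is a purely local condition at each boundary point and that the associated a priori estimates are proved by localizing with a partition of unity subordinate to a finite cover of $\overline{\Omega_1}$. Since $\Gamma$ and $I$ are disjoint $C^4$-hypersurfaces, each boundary patch of the cover meets only one of them, so the localized model problem is $-\Delta^2$ with clamped conditions near $\Gamma$, $-\Delta^2$ with the free conditions $\BB_1, \BB_2$ near $I$, and the parameter-elliptic full-space operator on interior patches. Lemma~\ref{4.1} verifies (SL) for precisely the clamped and the free boundary combinations, so \eqref{4-3} is a parameter-elliptic boundary value problem in the sense required by \cite{Agranovich-Vishik64}.

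I would then invoke the main estimate of that theory: there exist $\lambda_0 > 0$ and $C > 0$ such that for every $\lambda$ in the closed right half-plane with $|\lambda| \geq \lambda_0$ the problem \eqref{4-3} has a unique solution $u \in H^4(\Omega_1)$ and the parameter-dependent bound
\[
\sum_{j=0}^{4} |\lambda|^{\,1 - j/4}\, \|u\|_{H^j(\Omega_1)} \leq C\,\|f\|_{L^2(\Omega_1)}
\]
holds. Specializing to real $\lambda \geq \lambda_0$ and discarding the nonnegative intermediate terms $j = 1,2,3$ retains the summands $\|u\|_{H^4(\Omega_1)}$ (from $j=4$) and $\lambda\,\|u\|_{L^2(\Omega_1)}$ (from $j=0$), which is exactly \eqref{4-4}; uniqueness is immediate since $f = 0$ then forces $u = 0$.

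The main obstacle is conceptual rather than computational: one must make sure that the Agranovich-Vishik framework---typically phrased for a single boundary operator system on a connected boundary---actually applies to the present mixed problem. This is settled by the localization remark above, which relies only on the smoothness and disjointness of $\Gamma$ and $I$. As a fallback, existence and uniqueness of a weak solution $u \in H^2_\Ga(\Om_1)$ can be produced independently via the Lax-Milgram theorem applied to the form $\lambda\langle u, v\rangle_{L^2(\Omega_1)} + \langle u, v\rangle_{H^2_\Ga(\Om_1)}$, which is coercive by the norm equivalence established earlier and for which the conditions on $I$ appear as natural boundary conditions via the Green formula \eqref{Eq-Green-formula-bilaplace-mu}; the $H^4$-regularity and the parameter-dependent estimate are then supplied by the parameter-elliptic regularity theory. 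The direct citation, however, is the most economical route.
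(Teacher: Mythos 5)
Your overall strategy coincides with the paper's: parameter-ellipticity of each of the two constituent boundary systems (Lemma~\ref{4.1}) combined with a localization that exploits the disjointness of $\Gamma$ and $I$. The gap is at the point where you ``invoke the main estimate'' of \cite{Agranovich-Vishik64} for the mixed problem \eqref{4-3}. That theory, as cited, is formulated for a single system of boundary operators prescribed on all of $\partial\Omega_1$, so \eqref{4-3} is not literally an instance of it; your ``localization remark'' is not a reduction to the cited theorem but is itself the content that must be proved (the paper makes this point explicitly in Remark~\ref{4.4}\,a)). Moreover, for the existence half of the statement a localization of the \emph{estimate} is not enough: one has to \emph{construct} a solution. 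The paper does this by gluing the two single-system resolvents into a parametrix: with cut-offs $\varphi_1+\varphi_2=1$, $\varphi_1=1$ near $\Gamma$, $\supp\varphi_1\cap I=\emptyset$, and $\psi_j=1$ on $\supp\varphi_j$, it sets $R(\lambda)f:=\varphi_1R_1(\lambda)\psi_1f+\varphi_2R_2(\lambda)\psi_2f$, where $R_1(\lambda)$ and $R_2(\lambda)$ solve the clamped and the free problem on \emph{all} of $\partial\Omega_1$. Then $(\lambda+\Delta^2)R(\lambda)=1+T(\lambda)$ with $T(\lambda)$ a third-order commutator term whose operator norm on $L^2(\Omega_1)$ is made smaller than $1$ for $\lambda\ge\lambda_0$ large, using the Agranovich--Vishik estimates and the interpolation inequality; the solution is $u=R(\lambda)(1+T(\lambda))^{-1}f$. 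The a priori bound \eqref{4-4} is then proved separately by applying the single-system estimates to $\varphi_1u$ and $\varphi_2u$ and absorbing the resulting $\|u\|_{H^3(\Omega_1)}$-terms, again by interpolation and largeness of $\lambda$. None of this is deep, but it is the actual proof, and your primary route replaces it by an appeal to a theorem that does not cover the mixed case as stated.

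Your fallback --- Lax--Milgram for the coercive form $\lambda\langle u,v\rangle_{L^2(\Omega_1)}+\langle u,v\rangle_{H^2_\Gamma(\Omega_1)}$, with the conditions on $I$ arising as natural boundary conditions via \eqref{Eq-Green-formula-bilaplace-mu} --- is a legitimate alternative for existence and uniqueness of a weak solution, and is close in spirit to the proof of Theorem~\ref{thm:wellposed}. But it still defers the $H^4$-regularity and the $\lambda$-uniform estimate \eqref{4-4} to ``the parameter-elliptic regularity theory,'' which is once more the mixed-boundary statement in question. Either way, the parametrix/absorption argument has to be carried out; once you write it down, your proof becomes the paper's.
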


\begin{proof}
  \textbf{(i)} We first show the existence of a solution. Let $f\in L^2(\Omega_1)$. We choose $\varphi_1\in C^\infty(\bar{\Omega_1})$ with $0\le \varphi_1\le 1$, $\varphi_1 =1$ in a neighbourhood of $\Gamma$, and $\supp\varphi_{\md{1}}\cap I = \emptyset$. We set $\varphi_2 := 1-\varphi_1$ on $\bar{\Omega_1}$. Further, let $\psi_j\in C^\infty(\bar\Omega_1)$, $j=1,2$, with $\md{0\le \psi_j\le 1}$, $\psi_j = 1$ on $\supp\varphi_j$, $\supp\psi_1 \cap I = \emptyset$, and $\supp\psi_2 \cap \Gamma = \emptyset$.

  By Lemma~\ref{4.1}, the boundary value problem given by $-\Delta^2$ and clamped boundary conditions is parameter-elliptic. Therefore (see \cite[Theorem~5.1]{Agranovich-Vishik64}) for $\lambda\ge \lambda_0$ with sufficiently large $\lambda_0$ there exists a unique solution $u^{(1)} = R_1(\lambda)\psi_1 f$ of
  \begin{align*}
    (\lambda+\Delta^2) u^{(1)} & = \psi_1 f\quad\text{ in } \Omega_1,\\
    u^{(1)} = \partial_\nu u^{(1)} & = 0 \quad\text{ on }\partial\Omega_1.
  \end{align*}
  In the same way, using parameter-ellipticity of the boundary value problem $(-\Delta^2, \BB_1,\BB_2)$, there exists a unique solution $u^{(2)}= R_2(\lambda)\psi_2 f$ of
  \begin{align*}
    (\lambda+\Delta^2) u^{(2)} & = \psi_2 f\quad\text{ in }\Omega_1,\\
    \BB_1 u^{(2)} = \BB_2 u ^{(2)} & = 0 \quad\text{ on }\partial\Omega_1.
  \end{align*}
  Moreover, the a priori-estimate
  \begin{equation}
    \label{4-5}
    \|u^{(j)}\|_{H^4(\Omega_1)} + \lambda\,\|u^{(j)}\|_{L^2(\Omega_1)} \le c_2 \|\psi_j f\|_{L^2(\Omega_1)}
  \end{equation}
  holds for all $\lambda\ge\lambda_0$ with a constant $c_2$ independent of $\lambda$ and $f$ (see \cite[Theorem~4.1]{Agranovich-Vishik64}).

  For $\lambda\ge\lambda_0$, we define
  \[ R(\lambda) f := \varphi_1 R_1(\lambda)\psi_1 f + \varphi_2 R_2(\lambda)\psi_2 f.\]
  By the product rule,
  \begin{align*}
    (\lambda+\Delta^2) R(\lambda) f & = \varphi_1 (\lambda+\Delta^2) R_1(\lambda)\psi_1f + \varphi_2 (\lambda+\Delta^2) R_2(\lambda)\psi_2 f \\
    & \quad + S_1(D) R_1(\lambda)\psi_1 f + S_2(D) R_2(\lambda)\psi_2 f,
  \end{align*}
  where $S_1(D)$ and $S_2(D)$ are linear partial differential operators of order $3$ depending on the choice of $\varphi_1$, but not on $\lambda$ or $f$. As $(\lambda+\Delta^2)R_j(\lambda)\psi_j f = \psi_j f$ and $\varphi_j\psi_j=\varphi_j$, $j=1,2$, we obtain
  \begin{equation}
    \label{4-6}
    (\lambda+\Delta^2)R(\lambda) f = (1+T(\lambda)) f
  \end{equation}
  with $T(\lambda)f := S_1(D) R_1(\lambda)\psi_1 f + S_2(D)R_2(\lambda)\psi_2 f$. As $S_j(D)$ are bounded linear operators from $H^3(\Omega_1)$ to $L^2(\Omega_1)$, we can estimate
  \begin{align*}
    \|S_j(D)R_j(\lambda)\psi_j f\|_{L^2(\Omega_1)} & \le C \|R_j(\lambda)\psi_j f\|_{H^3(\Omega_1)} \\
    & \le \delta \|R_j(\lambda)\psi_j f\|_{H^4(\Omega_1)} + C_\delta \|R_j(\lambda) \psi_j f\|_{L^2(\Omega_1)}\\
    & \le c_2 \delta\|f\|_{L^2(\Omega_1)} + \frac{c_2}{\lambda} \, C_\delta \|f\|_{L^2(\Omega_1)}.
  \end{align*}
  Here we used the interpolation inequality and \eqref{4-5}. Now we first choose $\delta>0$ small enough such that $c_2\delta\le \frac14$ and then $\lambda\ge\lambda_0$ with $\lambda_0$ large enough such that $\frac{\md{c_2}}\lambda\, C_\delta \le \frac14$. Therefore, the norm of $T(\lambda)$ as a bounded operator in $L^2(\Omega_1)$ is not larger than $\frac12$, and  $1+T(\lambda)$ is invertible. So we can define
  \[ u := R(\lambda) (1+T(\lambda))^{-1} f\in H^4(\Omega_1).\]
  From \eqref{4-6} we see $(\lambda+\Delta^2) u = f$, and by definition of $R(\lambda)$ we have
  \[ u|_{\Gamma} = (R_1(\lambda)\psi_1 f)|_\Gamma = 0,\; \partial_\nu u|_{\Gamma} =0 \]
    as well as
  \[ \BB_j u|_I = \BB_j \big( \varphi_2 R_2(\lambda)\psi_2 f\big)|_I = \BB_j R_2(\lambda)\psi_2 f|_I = 0\; (j=1,2).\]
  Therefore, $u$ is a solution of the boundary value problem \eqref{4-3}.

  \textbf{(ii)} Now we show that every solution of \eqref{4-3} satisfies the a priori-estimate  \eqref{4-4}. Let $u\in H^4(\Omega_1)$ be a solution of \eqref{4-3}. Then $u^{(1)} := u\md{\varphi_1} $ is a solution of the boundary value problem
  \begin{align*}
    (\lambda+\Delta^2) u^{(1)} & = \md{\varphi_1} f + \tilde S_1(D) u \quad\text{ in }\Omega_1,\\
    u^{(1)} = \partial_\nu u^{(1)} & = 0 \quad\text{ on }\partial\Omega_1,
  \end{align*}
  where $\tilde S_1(D)$ is a linear partial differential operator of order $3$.
  By parameter-elliptic theory \cite[Theorem~4.1]{Agranovich-Vishik64}, $u^{(1)}$ satisfies
  \[ \|u^{(1)}\|_{H^4(\Omega_1)} + \lambda\,\| u^{(1)}\|_{L^2(\Omega_1)} \le C\big( \|f\|_{L^2(\Omega_1)} + \|u\|_{H^3(\Omega_1)}\big).\]
  The same holds for $u^{(2)} := \md{\varphi_2} u$ by parameter-ellipticity of $(-\Delta^2, \BB_1,\BB_2)$. For the sum $u=u^{(1)}+u^{(2)}$, we get
  \[ \|u\|_{H^4(\Omega_1)} + \lambda \|u\|_{L^2(\Omega_1)} \le C \big( \|f\|_{L^2(\Omega_1)} + \|u\|_{H^3(\Omega_1)}\big).\]
  Now, by interpolation inequality again, we can estimate
  \[ \|u\|_{H^4(\Omega_1)} + \lambda \|u\|_{L^2(\Omega_1)} \le C  \|f\|_{L^2(\Omega_1)} + \delta \|u\|_{H^4(\Omega_1)}+ C_\delta \|u\|_{L^2(\Omega_1)}.\]
  Choosing $\delta\le\frac12$ and then $\lambda_0>2C_\delta$, we can absorb the $u$-dependent terms on the right-hand side and obtain
  \[ \|u\|_{H^4(\Omega_1)} + \lambda\,\|u\|_{L^2(\Omega_1)} \le C \|f\|_{L^2(\Omega_1)}\quad (\lambda\ge\lambda_0).\]
  This also yields uniqueness of the solution.
\end{proof}

\begin{corollary}
  \label{4.3}
  Let $f\in L^2(\Omega_1)$, $g_1\in H^{7/2}(\Gamma)$, $g_2\in H^{5/2}(\Gamma)$, $h_1\in H^{3/2}(I)$, and $h_2\in H^{1/2}(I)$. Then for sufficiently large $\lambda_0>0$, the boundary value problem
  \begin{equation}
    \label{4-7}
    \begin{aligned}
      (\lambda_0+\Delta^2) u & = f\quad\text{ in }\Omega_1,\\
      u & = g_1 \quad\text{ on }\Gamma,\\
      \partial_\nu u & = g_2\quad\text{ on }\Gamma,\\
      \BB_1 u & = h_1 \quad\text{ on }I,\\
      \BB_2 u & = h_2 \quad\text{ on }I
    \end{aligned}
  \end{equation}
  has a unique solution $u\in H^4(\Omega_1)$. Moreover, the a priori-estimate
 \begin{equation}
    \label{4-8}
    \begin{aligned}
    \|u\|_{H^4(\Omega_1)} & \le C_2 \Big(  \|f\|_{L^2(\Omega_1)} + \|g_1\|_{H^{7/2}(\Gamma)} + \|g_2\|_{H^{5/2}(\Gamma)} \\
    & \quad + \|h_1\|_{H^{3/2}(I)} + \|h_2\|_{H^{1/2}(I)}\Big)
    \end{aligned}
  \end{equation}
  holds with a constant $C_2>0$ which depends on $\lambda_0$ but not on $u$ or on the data.
\end{corollary}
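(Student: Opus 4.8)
The plan is to reduce the inhomogeneous boundary value problem \eqref{4-7} to the homogeneous one solved in Lemma~\ref{4.2} by subtracting a suitable extension of the boundary data. First I would invoke the trace theorem for the $C^4$-domain $\Omega_1$: the map $u\mapsto\big(\partial_\nu^0 u,\partial_\nu^1 u,\partial_\nu^2 u,\partial_\nu^3 u\big)|_{\partial\Omega_1}$ is a bounded surjection from $H^4(\Omega_1)$ onto $\prod_{j=0}^{3}H^{7/2-j}(\partial\Omega_1)$ admitting a bounded right inverse. The regularity indices in the statement are chosen exactly so that $g_1,g_2$ match the normal orders $0,1$ on $\Gamma$ and $h_1,h_2$ match the orders $2,3$ on $I$: since $\BB_1$ is a second-order and $\BB_2$ a third-order operator, one has $\BB_1 u|_I\in H^{3/2}(I)$ and $\BB_2 u|_I\in H^{1/2}(I)$ for $u\in H^4(\Omega_1)$.

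The key step is to construct $v\in H^4(\Omega_1)$ realizing the prescribed data, i.e.\ $v|_\Gamma=g_1$, $\partial_\nu v|_\Gamma=g_2$, $\BB_1 v|_I=h_1$, $\BB_2 v|_I=h_2$, together with the bound
\[
\|v\|_{H^4(\Omega_1)}\le C\big(\|g_1\|_{H^{7/2}(\Gamma)}+\|g_2\|_{H^{5/2}(\Gamma)}+\|h_1\|_{H^{3/2}(I)}+\|h_2\|_{H^{1/2}(I)}\big).
\]
On $\Gamma$ the data $g_1,g_2$ prescribe $\partial_\nu^0 v,\partial_\nu^1 v$ directly. On $I$ I would use that, in boundary-adapted coordinates, $\BB_1 v=\partial_\nu^2 v+R_1$ and $\BB_2 v=-\partial_\nu^3 v+R_2$, where the top-order normal derivative enters with a nonzero coefficient — this normality is precisely what the symbol computation $\BB_1(i\xi_1,\partial_2)w=(\partial_2^2-\mu\xi_1^2)w$ and $\BB_2(i\xi_1,\partial_2)w=\big(-\partial_2^3+(2-\mu)\xi_1^2\partial_2\big)w$ in the proof of Lemma~\ref{4.1} records — and $R_1,R_2$ involve only tangential derivatives of the lower normal traces. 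Setting $\partial_\nu^0 v|_I=\partial_\nu^1 v|_I=0$, these relations can be solved triangularly for $\partial_\nu^2 v|_I\in H^{3/2}(I)$ and $\partial_\nu^3 v|_I\in H^{1/2}(I)$ in terms of $h_1,h_2$. Applying the bounded right inverse of the trace map to the resulting full set of normal traces on $\Gamma\cup I$ yields $v$ with the asserted estimate.

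Next I would set $w:=u-v$, so that \eqref{4-7} becomes equivalent to the homogeneous-boundary problem
\begin{align*}
(\lambda_0+\Delta^2)w &= f-(\lambda_0+\Delta^2)v=:\tilde f \quad\text{in }\Omega_1,\\
w=\partial_\nu w &= 0 \quad\text{on }\Gamma,\\
\BB_1 w=\BB_2 w &= 0 \quad\text{on }I,
\end{align*}
where $\tilde f\in L^2(\Omega_1)$ with $\|\tilde f\|_{L^2(\Omega_1)}\le\|f\|_{L^2(\Omega_1)}+C(\lambda_0)\|v\|_{H^4(\Omega_1)}$. For $\lambda_0$ sufficiently large, Lemma~\ref{4.2} provides a unique $w\in H^4(\Omega_1)$ with $\|w\|_{H^4(\Omega_1)}\le C_1\|\tilde f\|_{L^2(\Omega_1)}$. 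Then $u:=v+w$ solves \eqref{4-7}, and combining the two estimates yields \eqref{4-8}. Uniqueness follows since the difference of two solutions solves the homogeneous problem with $f=0$ and vanishing boundary data, hence vanishes by the uniqueness part of Lemma~\ref{4.2}.

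I expect the main obstacle to be the construction of the lifting $v$: one must verify that $(\BB_1,\BB_2)$ on $I$, together with the Dirichlet pair on $\Gamma$, forms a normal system whose trace-lifting is solvable with control of norms. This rests on the non-vanishing of the top-order normal coefficients of $\BB_1,\BB_2$, which is already available from Lemma~\ref{4.1}; the remaining bookkeeping of Sobolev indices is routine.
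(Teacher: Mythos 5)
Your proposal is correct and follows essentially the same route as the paper: lift the boundary data to some $v\in H^4(\Omega_1)$ via the trace retraction (setting $\partial_\nu^0 v=\partial_\nu^1 v=0$ on $I$ so that the tangential parts of $\BB_1,\BB_2$ drop out and the normal traces $\partial_\nu^2 v,\partial_\nu^3 v$ can be read off triangularly from $h_1,h_2$ — the paper's explicit choice is $\partial_\nu^3 v=h_2-(\div\nu)h_1$), then subtract and apply Lemma~\ref{4.2} to the resulting homogeneous problem. The normality of $(\BB_1,\BB_2)$ that you flag as the key point is exactly what the paper verifies using the normal/tangential decomposition of these operators.
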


\begin{proof}
  We define $G:= (g_1,g_2,0,0)^\top$ on $\Gamma$ and $H:=(0,0,h_1,h_2-(\div \nu)h_1)^\top$ on $I$. By \cite{Triebel78}, Section~4.7.1, p.~330, the map
  \[\mathcal R\colon u\mapsto \big( u|_{\partial\Omega_1}, \partial_\nu u|_{\partial\Omega_1},  \partial_\nu^2 u|_{\partial\Omega_1}, \partial_\nu^3 u|_{\partial\Omega_1},\big)^\top\]
  is a retraction from $H^4(\Omega_1)$ to $\prod_{j=0}^3 H^{4-j-1/2}(\partial\Omega_1)$. Let $\mathcal E$ denote  a coretraction to $\mathcal R$, and set
  \[ u^{(1)} := \mathcal E \big(\chi_\Gamma G +\chi_I H\big) \in H^4(\Omega_1).\]
  The boundary operators $\BB_1$ and $\BB_2$ can be expressed in terms of normal and tangential derivatives as (see \cite{MR1745475}, Propositions 3C.7 and 3C.11)
  \begin{align*}
   \BB_1 u^{(1)} & = \partial_\nu^2  u^{(1)} + \mu \partial_\tau^2 u^{(1)} + \mu (\mathrm{div}\,\nu)\partial_\nu u^{(1)}  ,\\
   \BB_2 u^{(1)} & = \partial_\nu^3 u^{(1)} + \partial_\nu \partial_\tau^2 u^{(1)} + (1-\mu)\partial_\tau \partial_\nu \partial_\tau u^{(1)} + \partial_\nu[(\div \nu)\partial_\nu u^{(1)}].
   \end{align*}
   As $u^{(1)}=\partial_\nu u^{(1)}=0$ on $I$ due to the definition of $u^{(1)}$, we obtain $\partial_\tau^k u^{(1)} = \partial_\tau^k \partial_\nu u^{(1)} = 0$ on $I$ for all $k\in \N$. Moreover, applying  the identity
   \[ \partial_\nu\partial_\tau w = \partial_\tau\partial_\nu w - (\div\nu) \partial_\tau w \]
   (see \cite[Corollary~3C.10]{MR1745475}) to $w:= u^{(1)}$ and to $w:= \partial_\tau u^{(1)}$, respectively, we see that
   \[ \partial_\nu\partial_\tau^2 u^{(1)} = \partial_\tau\partial_\nu\partial_\tau u ^{(1)} =0 \quad\text{ on }I.\]
   Therefore,
   \begin{align*}
     \BB_1 u^{(1)} & = \partial_\nu^2  u^{(1)} = h_1,\\
     \BB_2 u^{(1)} & = \partial_\nu^3 u^{(1)} + (\div \nu) \partial_\nu^2 u^{(1)} = h_2
   \end{align*}
   on $I$.    By continuity of $\mathcal E$, we have
   \begin{equation}
     \label{4-9}
     \begin{aligned}
       \|(\lambda_0 & +\Delta^2) u^{(1)}\|_{L^2(\Omega_1)} \le C \|u^{(1)}\|_{H^4(\Omega_1)} \\
       & \le C \Big( \|g_1\|_{H^{7/2}(\Gamma)} + \|g_2\|_{H^{5/2}(\Gamma)}  + \|h_1\|_{H^{3/2}(I)} + \|h_2\|_{H^{1/2}(I)}\Big)
     \end{aligned}
   \end{equation}
   with $C$ depending only on $\lambda_0$.

   Considering $u^{(2)} := u- u^{(1)}$, we see that $u$ solves \eqref{4-7} if and only if $u^{(2)}$ solves the boundary value problem
\begin{align*}
  (\lambda_0+\Delta^2) u^{(2)} &= \tilde f\quad\text{ in }\Omega_1,\\
  u^{(2)} = \partial_\nu u^{(2)} & = 0 \quad\text{ on }\Gamma,\\
  \BB_1 u^{(2)} = \BB_2 u^{(2)} & = 0 \quad\text{ on }I.
\end{align*}
Here, $\tilde f := f - (\lambda_0+\Delta^2) u^{(1)}$. By Lemma~\ref{4.2}, this is uniquely solvable, and the a priori estimate $\|u^{(2)}\|_{H^4(\Omega_1)} \le C \|\tilde f\|_{L^2(\Omega_1)}$ in connection with \eqref{4-9} yields \eqref{4-8}.
\end{proof}

\begin{remark}
  \label{4.4}
  a) The statement and the proof of Lemma~\ref{4.2} and Corollary~\ref{4.3} are independent of the particular equation. We have shown unique solvability and uniform a priori-estimates for boundary value problems where we have different boundary operators on disjoint and not connected parts of the boundary, given that on each part of the boundary the Shapiro-Lopatinskii condition holds.

  b) From elliptic theory, it is well known that the analog statement of Corollary~\ref{4.3} also holds (with $\lambda_0=0$) in the much easier situation of the Dirichlet Laplacian in $\Omega_2$: For every $f\in L^2(\Omega_2)$ and $g\in H^{3/2}(I)$ there exists a unique $u\in H^2(\Omega_2)$ with $\Delta u = f$ in $\Omega_2$ and $u|_I = g$, and $\|u\|_{H^2(\Omega_2)} \le C(\|f\|_{L^2(\Omega_2)} + \|g\|_{H^{3/2}(I)})$.
\end{remark}

The elliptic regularity results above are the key for the strong solvability of the transmission problem, i.e. for higher regularity of the weak solution.

\begin{theorem}
  \label{4.5} Let $U=(u_1,v_1,u_2,v_2)^\top \in D(\AA)$. Then $u_1\in H^4(\Omega_1)$ and $u_2\in H^2(\Omega_2)$. In particular, the transmission conditions hold in the strong sense of traces on the interface $I$.
\end{theorem}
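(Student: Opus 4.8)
The plan is to reduce the claim to the elliptic regularity results of Lemma~\ref{4.2}, Corollary~\ref{4.3} and Remark~\ref{4.4}, handling $\Om_2$ first and then $\Om_1$. The main difficulty is a circularity: in order to apply Corollary~\ref{4.3} to $u_1$ one needs the boundary datum $\BB_2 u_1$ on $I$, but $\BB_2 u_1$ is not a priori a well-defined trace for $u_1\in H^2_\Ga(\Om_1)$ with $\Delta^2u_1\in L^2(\Om_1)$, and the transmission condition \eqref{eq_tmc_2} is only assumed to hold weakly. I break this circle by manufacturing the boundary datum out of quantities that are already known to be regular, and then identifying $u_1$ with the resulting strong solution.

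First I would treat $u_2$. Since $U\in D(\AA)$, we have $\Delta u_2\in L^2(\Om_2)$, and the trace $u_2|_I=u_1|_I$ lies in $H^{3/2}(I)$ because $u_1\in H^2(\Om_1)$. By the Dirichlet regularity recorded in Remark~\ref{4.4}\,b), it follows that $u_2\in H^2(\Om_2)$, and in particular $\partial_\nu u_2|_I\in H^{1/2}(I)$. Since also $v_1\in H^2_\Ga(\Om_1)$, the trace $\partial_\nu v_1|_I\in H^{1/2}(I)$ is well defined, so
\[ h_2 := \rho\,\partial_\nu v_1 - \partial_\nu u_2 \in H^{1/2}(I) \]
makes sense (note that $h_2$ is built from the already-regular data, not from $\BB_2 u_1$). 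With $\lambda_0$ as in Corollary~\ref{4.3} and $f:=(\lambda_0+\Delta^2)u_1\in L^2(\Om_1)$, I apply Corollary~\ref{4.3} with boundary data $g_1=g_2=0$ on $\Gamma$ and $h_1=0$, $h_2$ on $I$, obtaining a unique $\tilde u_1\in H^4(\Om_1)$ solving \eqref{4-7}.

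The key step is to show $u_1=\tilde u_1$. Set $W:=u_1-\tilde u_1\in H^2_\Ga(\Om_1)$; by construction $(\lambda_0+\Delta^2)W=0$, i.e. $\Delta^2 W=-\lambda_0 W$. In the weak transmission conditions \eqref{weak-tm} for $U$ I take test functions $\Phi=(0,\psi_1,0,\psi_2)$ with $\psi_1\in H^2_\Ga(\Om_1)$, $\psi_2\in H^1(\Om_2)$ and $\psi_1=\psi_2$ on $I$. After cancelling the common terms this yields a weak boundary identity in which, using Green's formula for $u_2\in H^2(\Om_2)$, the $\Om_2$-contribution reduces to $-\langle\partial_\nu u_2,\psi_1\rangle_{L^2(I)}$ and the $\rho$-contribution to $\rho\langle\partial_\nu v_1,\psi_1\rangle_{L^2(I)}$, namely
\[ \langle u_1,\psi_1\rangle_{H^2_\Ga(\Om_1)} - \langle\Delta^2 u_1,\psi_1\rangle_{L^2(\Om_1)} + \rho\langle\partial_\nu v_1,\psi_1\rangle_{L^2(I)} - \langle\partial_\nu u_2,\psi_1\rangle_{L^2(I)} = 0. \]
On the other hand, since $\tilde u_1\in H^4(\Om_1)\cap H^2_\Ga(\Om_1)$ satisfies $\BB_1\tilde u_1=0$ and $\BB_2\tilde u_1=h_2$ on $I$, Green's formula \eqref{Eq-Green-formula-bilaplace-mu} gives
\[ \langle\Delta^2\tilde u_1,\psi_1\rangle_{L^2(\Om_1)} = \langle\tilde u_1,\psi_1\rangle_{H^2_\Ga(\Om_1)} + \rho\langle\partial_\nu v_1,\psi_1\rangle_{L^2(I)} - \langle\partial_\nu u_2,\psi_1\rangle_{L^2(I)}. \]
Substituting this into the previous identity, the boundary integrals over $I$ cancel exactly (this is precisely why $h_2$ was chosen as above), leaving
\[ \langle W,\psi_1\rangle_{H^2_\Ga(\Om_1)} - \langle\Delta^2 W,\psi_1\rangle_{L^2(\Om_1)} = 0 \qquad (\psi_1\in H^2_\Ga(\Om_1)). \]
Here one uses that every $\psi_1\in H^2_\Ga(\Om_1)$ is admissible, since its trace $\psi_1|_I\in H^{3/2}(I)$ extends to some $\psi_2\in H^1(\Om_2)$. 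Inserting $\Delta^2 W=-\lambda_0 W$ and choosing $\psi_1=W$ gives $\|W\|_{H^2_\Ga(\Om_1)}^2+\lambda_0\|W\|_{L^2(\Om_1)}^2=0$, hence $W=0$ and $u_1=\tilde u_1\in H^4(\Om_1)$.

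Finally, with $u_1\in H^4(\Om_1)$ and $u_2\in H^2(\Om_2)$ all the boundary expressions $u_1|_I,\partial_\nu u_1|_I,\BB_1 u_1,\BB_2 u_1$ and $\partial_\nu u_2|_I$ exist as genuine traces; since $u_1=\tilde u_1$ satisfies $\BB_1 u_1=0$ and $\BB_2 u_1=\rho\partial_\nu v_1-\partial_\nu u_2$ on $I$ strongly by construction, and since $u_1|_I=u_2|_I$ and $v_1=v_2$ on $I$ are built into $D(\AA)$, the transmission conditions \eqref{eq_tmc_0}--\eqref{eq_tmc_2} hold in the strong sense of traces. I expect the delicate point to be precisely the cancellation in the key step, i.e. organising the weak identity and the $H^4$ Green formula so that exactly the $I$-boundary terms drop out, together with the care needed to define $h_2$ from regular data rather than from the a priori meaningless $\BB_2 u_1$.
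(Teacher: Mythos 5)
Your proposal is correct and follows essentially the same route as the paper: first upgrade $u_2$ to $H^2(\Omega_2)$ via the Dirichlet problem of Remark~\ref{4.4}\,b), then construct $\tilde u_1\in H^4(\Omega_1)$ by Corollary~\ref{4.3} with exactly the boundary datum $\rho\partial_\nu v_1-\partial_\nu u_2$ on $I$, and identify $u_1=\tilde u_1$ by comparing the weak transmission identity with Green's formula \eqref{Eq-Green-formula-bilaplace-mu} and testing the difference against itself. The only cosmetic difference is that you subtract the two identities before testing, whereas the paper shows both $u_1$ and $\tilde u_1$ satisfy the same relation \eqref{4-11} and then inserts the difference; the cancellation of the $I$-boundary terms you flag as delicate is precisely the mechanism the paper uses.
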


\begin{proof}
  Let $U\in D(\AA)$ and $F = (f_1,g_1,f_2,g_2)^\top := \AA U$. Then $v_1=f_1\in H^2_\Gamma(\Omega_1)$, $v_2=f_2\in H^1(\Omega_2)$, $\Delta u_2 = g_2+\beta f_2$, and $\Delta^2 u_1 = \rho\Delta f_1 - g_1$.

  By Remark~\ref{4.4} b), there exists a unique $\tilde u_2\in H^2(\Omega_{\md{2}})$ such that $\Delta \tilde u_2 = g_2+\beta f_2$ in $\Omega_1$ and $\tilde u_2|_I = u_1|_I$. As $u_2-\tilde u_2$ belongs to $H^1_0(\Omega_{\md{2}})$ and is a weak solution of $\Delta(u_2-\tilde u_2)=0$, we immediately obtain $\tilde u_2 = u_2$ which already yields $u_2\in H^2(\Omega_2)$.

  Similarly, by Corollary~\ref{4.3} there exists a unique solution $\tilde u_1\in H^4(\Omega_1)$ of the boundary value problem
  \begin{equation}
    \label{4-10}
    \begin{aligned}
      (\lambda_0 + \Delta^2) \tilde u_1 & = \lambda_0 u_1 + \rho\Delta f_1 - g_1\quad \text{ in }\Omega_1,\\
      \tilde u_1 = \partial_\nu \tilde u_1 & = 0 \quad\text{ on }\Gamma,\\
      \BB_1 \tilde u_1 = 0,\; \BB_2 \tilde u_1 & = \rho\partial_\nu f_1 \md{-} \partial_\nu u_2\quad\text{ on } I.
    \end{aligned}
  \end{equation}
  Note here that $\lambda_0 u_1+\rho\Delta f_1 - g_1 \in L^2(\Omega_1)$ and $\rho\partial_\nu f_1 + \partial_\nu u_2\in H^{1/2}(I)$, and that all boundary conditions hold in the trace sense.

  Let $\psi_1\in H^2_\Gamma(\Omega_1)$. Then \eqref{Eq-Green-formula-bilaplace-mu} in combination with the boundary conditions above yields
  \begin{equation}
    \label{4-11}
    \begin{aligned}
      \langle (\lambda_0 + \Delta^2) \tilde u_1,\psi_1\rangle_{L^2(\Omega_1)} & = \lambda_0 \langle \tilde u_1, \psi_1\rangle_{L^2(\Omega_1)} + \langle \tilde u_1,\psi_1\rangle_{H^2_\Gamma(\Omega_1)}\\
      & \quad + \langle \rho\partial_\nu f_1 \md{-} \partial_\nu u_2,\psi_1\rangle_{L^2(I)}.
    \end{aligned}
  \end{equation}

  We compare $\tilde u_{\md{1}}$ with the weak solution $u_{\md{1}}$. For this, we consider $\Phi := (0,\psi_1,0,\psi_2)^\top$ with $\psi_1\in H^2_\Gamma(\Omega_1)$, $\psi_2\in H^1(\Omega_{\md{2}})$, and $\psi_1=\psi_2$ on $I$. By definition of $D(\AA)$, we obtain
  \begin{align*}
    \langle \AA U,\Phi\rangle_{\HH} &  = \langle -\Delta^2 u_1 +\rho\Delta v_1, \psi_1\rangle_{L^2(\Omega_1)} + \langle \Delta u_2-\beta v_2,\psi_2\rangle_{L^2(\Omega_2)}\\
    & = - \langle u_1,\psi_1\rangle_{H^2_\Gamma(\Omega_1)} - \rho\langle\nabla v_1,\nabla \psi_1\rangle_{L^2(\Omega_1)^2} \\
    & \quad - \langle \nabla u_2,\nabla\psi_2\rangle_{L^2(\Omega_2)^2} - \beta \langle v_2,\psi_2\rangle_{L^2(\Omega_2)}.
  \end{align*}
  From this,  $v_1=f_1$ and integration by parts (as we already know $u_2\in H^2(\Omega_2)$), we see that
  \begin{align*}
    \langle \Delta^2 u_1,\psi_1\rangle_{L^2(\Omega_1)} & = \langle\rho\Delta v_1,\psi_1\rangle_{L^2(\Omega_1)} + \langle \Delta u_2,\psi_2\rangle_{L^2(\Omega_2)} \\
    & \quad + \langle u_1,\psi_1\rangle_{H^2_\Gamma(\Omega_1)} + \langle\rho\nabla v_1,\nabla \psi_1\rangle_{L^2(\Omega_1)^2} + \langle\nabla u_2,\nabla \psi_2\rangle_{L^2(\Omega_2)^2} \\
    & = \langle \rho \partial_\nu f_1,\psi_1\rangle_{L^2(I)} \md{-} \langle \partial_\nu u_2,\psi_2\rangle_{L^2(I)} \md{+ \langle u_1,\psi_1\rangle_{H^2_\Gamma(\Omega_1)}} \\
    & = \langle \rho\partial_\nu f_1 \md{-} \partial_\nu u_2, \psi_1\rangle_{L^2(I)} \md{+ \langle u_1,\psi_1\rangle_{H^2_\Gamma(\Omega_1)}}.
  \end{align*}
  In the last step we used $\psi_1=\psi_2$ on $I$. Therefore, \eqref{4-11} also holds with $\tilde u_1$ being replaced by $u_1$.

  By definition of $\tilde u_1$, we have $(\lambda_0+\Delta^2)\tilde u_1 = (\lambda_0+\Delta^2) u_1 = \lambda_0 u_1 + \rho\Delta f_1 - g_1$. Therefore, we can insert the difference $w:= \tilde u_1 - u_1$ into \eqref{4-11} and obtain $0 = \md{\lambda_0\langle w, \psi_1\rangle_{L^2(\Om_1)} + } \langle w,\psi_1\rangle_{H^2_\Gamma(\Omega_1)}$ for all $\psi_1\in H^2_\Gamma(\Omega_1)$. But by construction $w\in H^2_\Gamma(\Omega_1)$, so we  can set $\psi_1:= w$ and get $w=0$, i.e., $u_1 = \tilde u_1\in H^4(\Omega_1)$.
\end{proof}

\section{Polynomial stability}

As we saw in Section \ref{expstab}, the system is not exponentially stable when $\beta=0$. When $\beta=\rho=0$, \eqref{dissieq} shows that the system is conservative. In this section we consider the case $\beta=0$ and $\rho>0$ and show that polynomial decay is still guaranteed under certain geometrical conditions. More precisely, we assume that there exists some $x_0\in \R^2$ such that
\begin{align}\label{eq:geometric}
	q \cdot \nu = q^\top \nu \leq 0
\end{align}
on $I$, where $q(x):=x-x_0$. Note that $\nu$ is the inner normal w.r.t. to $\Om_2$, which is why we require $q\cdot \nu\leq 0$ instead of $q \cdot \nu\geq 0$. In order to prove the polynomial stability, we use the following result by Borichov and Tomilov (Theorem 2.4 in \cite{Borichev-Tomilov10})

\begin{theorem}\label{lemma:pol1}
	Let $(T(t))_{t\geq 0}$ be a bounded $C_0$-semigroup on a Hilbert space $H$ with generator $A$ such that $i\R \subset \rho(A)$. Then, for fixed $\alpha > 0$ the following
	conditions are equivalent:
	\begin{enumerate}
	[$(i)$]
		\item There exist $C > 0$ and $\lambda_0 > 0$ such that for all $\lambda \in \R$ with $\vert \lambda \vert > \lambda_0$ and all $F \in H$ it holds
			\begin{align*}
				\Vert (i\lambda - A)^{-1}F \Vert \leq C\vert \lambda \vert^\alpha \Vert F \Vert.
			\end{align*}
		\item There exists some $C > 0$ such that for all $t > 0$ and all $U_0 \in D(A)$ it holds
			\begin{align*}
				\Vert T(t)U_0 \Vert \leq Ct^{-\frac{1}{\alpha}} \Vert AU_0 \Vert.
			\end{align*}
	\end{enumerate}
\end{theorem}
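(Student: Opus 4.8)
This is exactly the Borichev--Tomilov characterisation, so the plan is to reconstruct their argument, treating the two implications separately. In both directions the engine is the representation of the resolvent on the imaginary axis as a Fourier/Laplace transform of the orbit, combined with the Plancherel theorem; the latter is available only because $H$ is a Hilbert space, and it is precisely what permits the sharp exponent rather than a rate carrying a logarithmic defect. Throughout I would write $R_\lambda := (i\lambda - A)^{-1}$, which exists for all $\lambda\in\R$ by hypothesis, and use that $0\in\rho(A)$ gives $R_0 = -A^{-1}$ and hence the resolvent identity $R_\lambda = i\lambda\, R_\lambda A^{-1} - A^{-1}$.

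For the implication $(ii)\Rightarrow(i)$, which is the more elementary one, I would start from
\[
 R_\lambda A^{-1}F = \int_0^\infty e^{-i\lambda t} T(t) A^{-1} F \, dt ,
\]
valid in the Abel sense since $T$ is bounded and $i\lambda\in\rho(A)$. Inserting the decay bound $\Vert T(t)A^{-1}\Vert \le C t^{-1/\alpha}$ furnished by $(ii)$ and splitting the time integral at a threshold of order $\vert\lambda\vert^{-1}$, one balances the small- and large-time contributions to arrive at $\Vert R_\lambda A^{-1}\Vert \le C\vert\lambda\vert^{\alpha-1}$. Multiplying the identity $R_\lambda = i\lambda\, R_\lambda A^{-1} - A^{-1}$ by $\vert\lambda\vert$ then produces $\Vert R_\lambda\Vert \le C\vert\lambda\vert^{\alpha}$ for large $\vert\lambda\vert$, which is $(i)$.

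The implication $(i)\Rightarrow(ii)$ is the heart of the matter. Here I would first exploit that every application of $A^{-1}$ gains one power of $\vert\lambda\vert^{-1}$: from $R_\lambda A^{-k} = \tfrac{1}{i\lambda}\big(R_\lambda A^{-(k-1)} + A^{-k}\big)$ and $(i)$ one obtains inductively $\Vert R_\lambda A^{-k}\Vert \le C\vert\lambda\vert^{\alpha-k}$ for large $\vert\lambda\vert$. Choosing $k>\alpha+\tfrac{1}{2}$ makes $\lambda\mapsto R_\lambda A^{-k}x$ square-integrable at infinity (it is bounded near the origin since $i\R\subset\rho(A)$), so that the Plancherel identity
\[
 \int_{-\infty}^\infty \Vert R_\lambda A^{-k}x\Vert^2 \, d\lambda = 2\pi \int_0^\infty \Vert T(t)A^{-k}x\Vert^2 \, dt ,
\]
justified by approaching the axis through $(\varepsilon+i\lambda-A)^{-1}$ and letting $\varepsilon\downarrow 0$, yields square-integrability in time of the smoothed orbit $T(t)A^{-k}x$.

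The hard part, and the step I expect to be the real obstacle, is upgrading this $L^2$-in-time information to the sharp pointwise rate $t^{-1/\alpha}$. A purely qualitative argument gives only decay without a rate, and the general Batty--Duyckaerts machinery gives a rate but with a logarithmic loss in the polynomial regime. To remove the loss one must run the Plancherel estimate in a frequency-dependent fashion: for fixed large $t$, split the spectral integral at a cut-off $R=R(t)$, control the high-frequency part $\vert\lambda\vert>R$ by the square-integrability just obtained and the low-frequency part $\vert\lambda\vert\le R$ directly from the polynomial growth in $(i)$, and then optimise $R$ as a function of $t$. Carrying out this rescaling carefully — where the Hilbert-space geometry is indispensable — produces exactly the exponent $1/\alpha$; a routine interpolation (moment) inequality for the fractional powers of $A$ then transfers the rate from the smoothed orbit $T(t)A^{-k}$ to $T(t)A^{-1}$ and closes the equivalence.
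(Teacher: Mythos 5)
The paper offers no proof of this statement: it is quoted verbatim as Theorem~2.4 of Borichev and Tomilov \cite{Borichev-Tomilov10} and used as a black box, so there is no internal argument to compare yours against. Judged on its own terms, your reconstruction identifies the right ingredients (resolvent identities gaining powers of $|\lambda|^{-1}$, Plancherel on the Hilbert space, a moment inequality at the end) but has two genuine gaps. First, in the direction $(ii)\Rightarrow(i)$ you insert the bound $\Vert T(t)A^{-1}\Vert\le Ct^{-1/\alpha}$ into the full Laplace integral $\int_0^\infty e^{-i\lambda t}T(t)A^{-1}F\,dt$. That integral is not absolutely convergent for $\alpha\ge 1$, and invoking Abel summability does not license passing the norm inside; even for $\alpha<1$ your split at $|\lambda|^{-1}$ produces $|\lambda|^{(1-\alpha)/\alpha}$, not the claimed $|\lambda|^{\alpha-1}$. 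The standard (and correct) route is the truncated identity $R_\lambda x=\int_0^t e^{-i\lambda s}T(s)x\,ds+e^{-i\lambda t}T(t)R_\lambda x$ combined with $T(t)R_\lambda=T(t)A^{-1}(i\lambda R_\lambda-I)$ and the choice $t\sim|\lambda|^\alpha$ to absorb $\tfrac12\Vert R_\lambda x\Vert$.

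Second, and more seriously, in the direction $(i)\Rightarrow(ii)$ the passage from square-integrability of $t\mapsto T(t)A^{-k}x$ to the pointwise rate $t^{-1/\alpha}$ — which you yourself flag as ``the real obstacle'' — is not supplied by the device you propose. Splitting the Fourier inversion integral at a frequency cut-off $R(t)$ and optimising cannot work: the inversion integral is not absolutely convergent, and the low-frequency piece $\int_{|\lambda|\le R}e^{i\lambda t}R_\lambda x\,d\lambda$ is estimated by $R\sup_{|\lambda|\le R}\Vert R_\lambda x\Vert$, which carries no decay in $t$ at all. The actual Borichev--Tomilov mechanism is different: one integrates by parts in the inversion formula, using $\partial_\lambda R_\lambda=-iR_\lambda^2$, to trade powers of $t$ for powers of the resolvent, then estimates $\langle R_\lambda^{\,n}A^{-k}x,y\rangle$ by distributing half of the resolvent powers onto $x$ and half onto $y$ via the adjoint, applying Cauchy--Schwarz in $\lambda$ and Plancherel to each factor separately (this is where the Hilbert-space structure enters), and finally transfers the rate from $T(t)A^{-k}$ to $T(t)A^{-1}$ by the moment inequality. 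Without this step the argument only reproduces the Batty--Duyckaerts rate with a logarithmic loss, which is precisely what the theorem is designed to remove; as written, the proposal therefore does not establish the equivalence.
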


We now state the main result of this section: we show polynomial stability for the transmission problem in the case where only the plate equation is damped but the wave equation
is undamped. Using rather general methods, it is very likely that the rate of decay is not optimal. On the other hand, the approach might be versatile enough to be applicable to different transmission problems of a similar form, i.e. transmission problems where the equation in the outer domain is parameter-elliptic, whereas the equation in the inner domain simply is of lower order.

\begin{theorem}\label{thm:pol}
	Let $\beta = 0$ and $\rho > 0$ and assume that the geometrical condtion \eqref{eq:geometric} is satisfied. Then the semigroup $(S(t))_{t\geq0}$ decays polynomially
	of order at least $1/30$, i.e. there exists some constant $C>0$ such that
 	\[
  		\|S(t)\|_\HH\leq Ct^{-\frac{1}{30}}\|\AA U_0\|_\HH
 	\]
	for all $t>0$ and $U_0\in D(\AA)$.
\end{theorem}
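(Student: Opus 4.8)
The plan is to invoke the Borichev--Tomilov characterization of Theorem~\ref{lemma:pol1} with $\alpha = 30$. Since $\AA$ generates a contraction semigroup by Theorem~\ref{thm:wellposed}, the semigroup is bounded, and it suffices to establish two facts: first, that $i\R\subset\rho(\AA)$, and second, the resolvent estimate
\[
 \|(i\lambda-\AA)^{-1}F\|_\HH \le C\,|\lambda|^{30}\,\|F\|_\HH \qquad(|\lambda|>\lambda_0,\ F\in\HH).
\]
By the equivalence of $(i)$ and $(ii)$ in Theorem~\ref{lemma:pol1}, these two facts yield the decay rate $t^{-1/30}$ claimed.

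For the spectral condition I would first observe that $\AA$ has compact resolvent: by Theorem~\ref{4.5} the domain $D(\AA)$ embeds into $H^4(\Om_1)\times H^2_\Ga(\Om_1)\times H^2(\Om_2)\times H^1(\Om_2)$, whose inclusion into $\HH$ is compact by the Rellich--Kondrachov theorem. Hence $\sigma(\AA)$ is discrete and it remains to exclude eigenvalues on $i\R$. Since $0\in\rho(\AA)$ by the Remark following Theorem~\ref{thm:wellposed}, suppose $\AA U = i\lambda U$ with $\lambda\neq 0$ and $U=(u_1,v_1,u_2,v_2)^\top$. Taking real parts in $\langle\AA U,U\rangle_\HH = i\lambda\|U\|_\HH^2$ and using the dissipation identity \eqref{dissieq} with $\beta=0$ gives $\|\nabla v_1\|_{L^2(\Om_1)^2}=0$, hence $v_1=0$ by Poincar\'e's inequality; the relations $\AA U=i\lambda U$ then force $u_1=0$, and $u_2$ solves $\Delta u_2 + \lambda^2 u_2 = 0$ in $\Om_2$ subject to both $u_2|_I = u_1|_I = 0$ and, via the transmission condition \eqref{eq_tmc_2}, $\partial_\nu u_2|_I = 0$. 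By Holmgren's uniqueness theorem this over-determined Cauchy problem has only the trivial solution, so $u_2=v_2=0$ and $U=0$. Thus $i\lambda-\AA$ is injective, hence bijective, and $i\R\subset\rho(\AA)$.

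The core of the argument is the resolvent estimate. Writing $(i\lambda-\AA)U=F$ componentwise and eliminating $v_1=i\lambda u_1-f_1$ and $v_2 = i\lambda u_2 - f_2$, the plate component satisfies $(\Delta^2 - i\lambda\rho\Delta - \lambda^2)u_1 = g_1 + i\lambda f_1 - \rho\Delta f_1$ in $\Om_1$, while the membrane component satisfies $\Delta u_2 + \lambda^2 u_2 = -(i\lambda f_2 + g_2)$ in $\Om_2$. The dissipation identity gives the fundamental bound $\rho\|\nabla v_1\|_{L^2(\Om_1)^2}^2 = \Re\langle F,U\rangle_\HH \le \|F\|_\HH\|U\|_\HH$, the only quantity directly controlled by the damping. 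I would propagate this in two stages. For the plate, $\Delta^2 - i\lambda\rho\Delta$ is parameter-elliptic by Lemma~\ref{4.1}, so the a priori estimates of Lemma~\ref{4.2} and Corollary~\ref{4.3}, applied with the clamped conditions on $\Ga$ and the free conditions $\BB_1 u_1,\BB_2 u_1$ on $I$, bound $u_1$ in $H^4(\Om_1)$ by $\|F\|_\HH$ together with the interface data supplied by \eqref{eq_tmc_2}, i.e. by traces of $\partial_\nu u_2$, at the cost of a fixed power of $\lambda$. For the membrane, I would use the Rellich--Morawetz multiplier $q\cdot\nabla u_2$ with $q(x)=x-x_0$: testing the Helmholtz equation against $\overline{q\cdot\nabla u_2}$ and integrating by parts produces the interior quantities $\|\nabla u_2\|_{L^2(\Om_2)^2}^2$ and $\lambda^2\|u_2\|_{L^2(\Om_2)}^2$ (hence $\|v_2\|_{L^2(\Om_2)}$ via $v_2=i\lambda u_2-f_2$), together with a boundary integral over $I$ whose leading term $\tfrac12\int_I (q\cdot\nu)\,|\partial_\nu u_2|^2\,{\rm d}\sigma$ carries the favourable sign precisely because of the geometric condition \eqref{eq:geometric}; the remaining interface terms involve the Dirichlet trace $u_2|_I=u_1|_I$ and tangential derivatives, which are controlled by the plate estimate.

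The main obstacle is the circular coupling at the interface: the $H^4$-bound for $u_1$ needs the trace $\partial_\nu u_2|_I$, while the multiplier estimate for $u_2$ needs traces of $u_1$ on $I$. I would break this loop by exploiting the parabolic smoothing of the damped plate, since the parameter-dependent estimates gain derivatives and decay in $\lambda$ relative to the hyperbolic part, so that the plate contributions to the interface terms can be absorbed after inserting the appropriate trace and interpolation inequalities, each costing a controlled power of $\lambda$. Careful bookkeeping of these powers through the two estimates and the final absorption step yields $\|U\|_\HH\le C|\lambda|^{30}\|F\|_\HH$; the accumulation of non-optimal trace and interpolation losses is exactly what produces the large exponent $30$, consistent with the remark that the method does not give the optimal rate. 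Combining this estimate with $i\R\subset\rho(\AA)$ and Theorem~\ref{lemma:pol1} completes the proof.
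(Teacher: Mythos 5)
Your skeleton coincides with the paper's: Borichev--Tomilov with $\alpha=30$, the componentwise resolvent equations \eqref{pol1}--\eqref{pol4}, the dissipation bound $\rho\|\nabla v_1\|_{L^2(\Om_1)^2}^2\le\|U\|_\HH\|F\|_\HH$ feeding a Poincar\'e bound on $\la^2\|u_1\|_{H^1(\Om_1)}^2$, the parameter-elliptic $H^4$-estimate of Corollary~\ref{4.3} for the plate, and the Rellich multiplier $q\cdot\nabla u_2$ for the membrane, closed by trace and interpolation inequalities. Your route to $i\R\subset\rho(\AA)$ (compact resolvent plus Holmgren on the overdetermined Helmholtz problem) differs from the paper's, which simply sets $F=0$ in the resolvent estimate, but it is sound. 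Two points, however, are genuine gaps rather than omitted routine detail.

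First, your account of how the geometric condition acts is backwards. The multiplier identity gives
\[
\la^2\|u_2\|_{L^2(\Om_2)}^2=-\tfrac{\la^2}{2}\int_I(q\cdot\nu)|u_2|^2\,{\rm d}S+\tfrac12\int_I(q\cdot\nu)|\nabla u_2|^2\,{\rm d}S-\Re\int_I\partial_\nu u_2\,(q\cdot\nabla\bar{u_2})\,{\rm d}S+\Re\int_{\Om_2}(g_2+i\la f_2)(q\cdot\nabla\bar{u_2})\,{\rm d}x,
\]
and $q\cdot\nu\le0$ is used to discard the full-gradient term $\tfrac12\int_I(q\cdot\nu)|\nabla u_2|^2\le0$. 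If you instead split into normal and tangential parts, the net coefficient of $(q\cdot\nu)|\partial_\nu u_2|^2$ on the right-hand side is $-\tfrac12$, which with $q\cdot\nu\le0$ is \emph{nonnegative}: the normal-derivative contribution is precisely the term that cannot be dropped by sign. It must be estimated in full, i.e.\ $\int_I|\partial_\nu u_2(q\cdot\nabla\bar{u_2})|\,{\rm d}S$ is bounded by substituting $\partial_\nu u_2=-(\BB_2u_1-i\la\rho\,\partial_\nu u_1-\rho\,\partial_\nu f_1)$ from the transmission condition and controlling $\|u_1\|_{H^{7/2}(\Om_1)}$ and $\|u_2\|_{H^{3/2}(\Om_2)}$; this is Lemma~\ref{lemma:pol3} and is where all the difficulty sits. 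Second, the exponent $30$ --- the entire quantitative content of the theorem --- is asserted, not derived. In the paper it emerges from a specific chain: $\|u_2\|_{H^2(\Om_2)}\le C|\la|(|\la|\|U\|_\HH+\|F\|_\HH)$ by elliptic regularity, $\|u_1\|_{H^4(\Om_1)}\le C|\la|(|\la|\|U\|_\HH+\|F\|_\HH)$ by Corollary~\ref{4.3}, the interpolations $\|u_1\|_{H^{7/2}}\le C\|u_1\|_{H^4}^{5/6}\|u_1\|_{H^1}^{1/6}$ and $\|u_2\|_{H^{3/2}}\le C\|u_2\|_{H^2}^{1/2}\|u_2\|_{H^1}^{1/2}$, and a final Young inequality on the worst product, $|\la|^{5/2}\|U\|_\HH^{23/12}\|F\|_\HH^{1/12}\le\varepsilon\|U\|_\HH^2+C(\varepsilon)|\la|^{60}\|F\|_\HH^2$. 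Without carrying out this bookkeeping your argument yields at best ``polynomial decay of some unspecified order,'' not the stated rate $t^{-1/30}$.
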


Throughout the remainder of this section, let $\lambda_0 > 0$, $\lambda \in \R$ with $\vert \lambda \vert > \lambda_0$, $F=(f_1,g_1,f_2,g_2)^\top \in \HH$ and
$U=(u_1,v_1,u_2,v_2)^\top \in D(\AA)$ such that $(i\lambda-\AA)U=F$.
We first observe that $(i\lambda-\AA)U=F$ implies
\begin{eqnarray}
	\label{pol1} v_1&=&i\lambda u_1-f_1,\\
	\label{pol2} -\lambda^2 u_1+\Delta^2 u_1-i\lambda\rho \Delta u_1&=&g_1+i\lambda f_1-\rho \Delta f_1,\\
	\label{pol3} v_2&=&i\lambda u_2-f_2,\\
	\label{pol4} -\lambda^2u_2-\Delta u_2&=&g_2+i\lambda f_2.
\end{eqnarray}
Multiplying \eqref{pol2} by $-\overline{u_1}$ and \eqref{pol4} by $-\overline{u_2}$, integrating and adding yields
\begin{align*}
 	\lambda^2&\big(\|u_1\|_{L^2(\Om_1)}^2+\|u_2\|_{L^2(\Om_2)}^2\big) -\langle\Delta^2 u_1,u_1\rangle_{L^2(\Om_1)}\\
 	&\quad +i\lambda\rho \langle\Delta u_1,u_1\rangle_{L^2(\Om_1)}+\langle \Delta u_2,u_2\rangle_{L^2(\Om_2)}\\
 	&=-\langle g_1+i\lambda f_1-\rho \Delta f_1,u_1\rangle_{L^2(\Om_1)}-\langle g_2+i\lambda f_2,u_2\rangle_{L^2(\Om_2)}.
\end{align*}
Using Lemma \ref{Lemma-Green-formula-bilaplace-mu}, integration by parts and plugging in the boundary and transmission conditions we obtain
\begin{align*}
 	\lambda^2&\big(\|u_1\|_{L^2(\Om_1)}^2+\|u_2\|_{L^2(\Om_2)}^2\big)-\|u_1\|_{H^2_\Gamma(\Om_1)}^2-i\lambda\rho\|\nabla u_1\|_{L^2(\Om_1)^2}^2-\|\nabla u_2\|_{L^2(\Om_2)^2}^2\\
 	& =-\rho\langle \nabla f_1,\nabla u_1\rangle_{L^2(\Om_1)^2}-\langle g_1+i\lambda f_1,u_1\rangle_{L^2(\Om_1)}-\langle g_2+i\lambda f_2,u_2\rangle_{L^2(\Om_2)}.
\end{align*}
Taking the real part in the above equality we see that
\begin{align*}
	\|u_1\|_{H^2_\Gamma(\Om_1)}^2+\|\nabla u_2\|_{L^2(\Om_2)^2}^2&\leq \lambda^2\big(\|u_1\|_{L^2(\Om_1)}^2+\|u_2\|_{L^2(\Om_2)}^2\big)\\
 	&\quad +\big(|\lambda|\|f_1\|_{L^2(\Om_1)}+\|g_1\|_{L^2(\Om_1)}\big)\|u_1\|_{L^2(\Om_1)}\\
 	&\quad +\big(|\lambda|\|f_2\|_{L^2(\Om_2)}+\|g_2\|_{L^2(\Om_1)}\big)\|u_2\|_{L^2(\Om_2)}\\
 	&\quad +\rho \|\nabla f_1\|_{L^2(\Om_1)^2}\|\nabla u_1\|_{L^2(\Om_1)^2}\\
 	&\leq \lambda^2\big(\|u_1\|_{L^2(\Om_1)}^2+\|u_2\|_{L^2(\Om_2)}^2\big) + C |\lambda| \|U\|_{\HH}\|F\|_{\HH},
\end{align*}
where we used the fact that $|\la|\geq \la_0$. Moreover, due to \eqref{pol1} and \eqref{pol3}, we have that
\[
	\|v_j\|_{L^2(\Om_j)}^2=\|i\la u_j-f_j\|_{L^2(\Om_j)}^2\leq 2\big(\la^2\|u_j\|_{L^2(\Om_j)}^2+\|f_j\|_{L^2(\Om_j)}^2\big)
\]
for $j=1,2$. Hence,
\begin{equation}\label{pol5}
 	\|v_1\|_{L^2(\Om_1)}^2+\|v_2\|_{L^2(\Om_2)}^2
 		\leq C \left( \la^2\big(\|u_1\|_{L^2(\Om_1)}^2+\|u_2\|_{L^2(\Om_2)}^2\big)+ \|F\|_{\HH}^2 \right)
\end{equation}
and therefore, combining \eqref{pol5} with the estimate for $u_1$ and $u_2$, we get that
\[
 	\|U\|_{\HH}^2\leq C \left(\lambda^2\big(\|u_1\|_{L^2(\Om_1)}^2+\|u_2\|_{L^2(\Om_2)}^2\big) + \big(|\lambda| \|U\|_{\HH}\|F\|_{\HH}+\|F\|_{\HH}^2\big) \right).
\]
It remains to estimate $\|u_1\|_{L^2(\Om_1)}^2$ and $\|u_2\|_{L^2(\Om_2)}^2$. In order to estimate $\|u_1\|_{L^2(\Om_1)}^2$, we observe that, due to \eqref{dissieq}, it holds
\[
 	\|\nabla v_1\|_{L^2(\Om_1)^2}^2\leq \frac{1}{\rho}\|U\|_{\HH}\|F\|_{\HH},
\]
and therefore, using Poincar\'e's inequality, we obtain that
\begin{align}
 	\la^2\|u_1\|_{H^1(\Om_1)}^2 = \|f_1+v_1\|_{H^1(\Om_1)}^2
 		& \leq \md{2}\big(\|f_1\|_{H^1(\Om_1)}^2+\|v_1\|_{H^1(\Om_1)}^2\big)		\notag\\
 		& \leq C\big(\|F\|_{\HH}^2+\|\nabla v_1\|_{L^2(\Om_1)^2}^2\big) 			\notag \\
 		& \leq C\big(\|U\|_{\HH}\|F\|_{\HH}+\|F\|_{\HH}^2\big).						\label{pol8}
 \end{align}
Using the fact that $|\la|\geq \la_0$ and $\|u_1\|_{L^2(\Om_1)}\leq \|u_1\|_{H^1(\Om_1)}$, we thus get that
\begin{equation}\label{pol7}
 	\|U\|_{\HH}^2\leq C \left(\lambda^2\|u_2\|_{L^2(\Om_2)}^2 + \big(|\lambda| \|U\|_{\HH}\|F\|_{\HH}+\|F\|_{\HH}^2\big) \right)
\end{equation}
and it remains to estimate $\|u_2\|_{L^2(\Om_2)}^2$, which will be done in the following lemmas.

\begin{lemma}\label{lemma:pol2}
	It holds
	\begin{align*}
		\la^2\|u_2\|_{L^2(\Om_2)}^2\leq C\big(|\la|\|U\|_{\HH}\|F\|_\HH +\|F\|_{\HH}^2\big) +\int_I\big|\partial_\nu u_2(q\nabla \overline{u_2})\big|\, {\rm d}S.
	\end{align*}
\end{lemma}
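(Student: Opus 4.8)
The plan is to prove this by a Rellich--Pohozaev multiplier argument applied to the Helmholtz-type equation \eqref{pol4}. By Theorem~\ref{4.5} the weak solution already satisfies $u_2\in H^2(\Om_2)$, so the traces of $\nabla u_2$ on $I$ exist and the integrations by parts below are justified. First I would rewrite \eqref{pol4} as $-\Delta u_2=\lambda^2 u_2+g_2+i\lambda f_2$, multiply by the radial multiplier $q\cdot\nabla\overline{u_2}$ with $q(x)=x-x_0$, integrate over $\Om_2$, and take real parts.

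The computation rests on the pointwise identities $\Re\big(u_2\,(q\cdot\nabla\overline{u_2})\big)=\tfrac12\,q\cdot\nabla|u_2|^2$ and $\Re\big(\nabla u_2\cdot\nabla(q\cdot\nabla\overline{u_2})\big)=|\nabla u_2|^2+\tfrac12\,q\cdot\nabla|\nabla u_2|^2$. Combining these with Green's formula and the divergence theorem on $\Om_2$ (whose outer normal is $-\nu$, since $\nu$ is the inner normal w.r.t. $\Om_2$) and using $\div q=2$ in $\R^2$, the interior gradient contributions coming from the Laplacian term cancel, and one is left with the identity
\begin{align*}
\lambda^2\|u_2\|_{L^2(\Om_2)}^2 &= \Re\int_{\Om_2}(g_2+i\lambda f_2)\,(q\cdot\nabla\overline{u_2})\,{\rm d}x - \Re\int_I \partial_\nu u_2\,(q\cdot\nabla\overline{u_2})\,{\rm d}S \\
&\quad + \frac12\int_I (q\cdot\nu)\,|\nabla u_2|^2\,{\rm d}S - \frac{\lambda^2}{2}\int_I (q\cdot\nu)\,|u_2|^2\,{\rm d}S.
\end{align*}

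It remains to estimate the four terms. The volume integral is handled by Cauchy--Schwarz: since $\|g_2+i\lambda f_2\|_{L^2(\Om_2)}\le C|\lambda|\|F\|_\HH$ (using $\|F\|_\HH\le\lambda_0^{-1}|\lambda|\|F\|_\HH$ as $|\lambda|\ge\lambda_0$) and $\|q\cdot\nabla u_2\|_{L^2(\Om_2)}\le C\|\nabla u_2\|_{L^2(\Om_2)}\le C\|U\|_\HH$ on the bounded domain $\Om_2$, it contributes at most $C|\lambda|\|U\|_\HH\|F\|_\HH$. The third term carries the favourable sign: the geometrical condition \eqref{eq:geometric} gives $q\cdot\nu\le 0$ on $I$, hence $\tfrac12\int_I(q\cdot\nu)|\nabla u_2|^2\,{\rm d}S\le 0$ and may simply be dropped. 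This is exactly where the star-shapedness enters, and it is what allows us to avoid estimating the full boundary gradient (in particular its tangential part). The second term is precisely the quantity kept in the statement, because $\big|\Re\int_I\partial_\nu u_2\,(q\cdot\nabla\overline{u_2})\,{\rm d}S\big|\le\int_I|\partial_\nu u_2\,(q\cdot\nabla\overline{u_2})|\,{\rm d}S$.

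The main obstacle is the last term, $-\tfrac{\lambda^2}{2}\int_I(q\cdot\nu)|u_2|^2\,{\rm d}S=\tfrac{\lambda^2}{2}\int_I|q\cdot\nu|\,|u_2|^2\,{\rm d}S$, which is positive and carries the full factor $\lambda^2$; note that here the geometry does \emph{not} help. I would tame it via the interface coupling: since $u_2=u_1$ on $I$ and $q\cdot\nu$ is bounded on the compact interface, the continuity of the trace $H^1(\Om_1)\to L^2(I)$ together with the already-established bound \eqref{pol8} yields $\lambda^2\int_I|q\cdot\nu|\,|u_2|^2\,{\rm d}S\le C\lambda^2\|u_1\|_{L^2(I)}^2\le C\lambda^2\|u_1\|_{H^1(\Om_1)}^2\le C\big(\|U\|_\HH\|F\|_\HH+\|F\|_\HH^2\big)$, which is absorbed into $C\big(|\lambda|\|U\|_\HH\|F\|_\HH+\|F\|_\HH^2\big)$ as $|\lambda|\ge\lambda_0$. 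Collecting the four estimates gives the asserted inequality. The only genuinely delicate step is realizing that this $\lambda^2$ boundary term is controlled by \eqref{pol8} rather than by the geometry, the latter being needed solely to discard the $|\nabla u_2|^2$ boundary contribution.
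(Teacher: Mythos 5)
Your proposal is correct and follows essentially the same route as the paper: the identical Rellich--Pohozaev multiplier identity (which the paper cites from Mitidieri rather than rederiving), dropping the $|\nabla u_2|^2$ boundary term via $q\cdot\nu\le 0$, and controlling the $\lambda^2\int_I|q\cdot\nu||u_2|^2\,{\rm d}S$ term through $u_2=u_1$ on $I$, the trace theorem and \eqref{pol8}. No substantive differences.
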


\begin{proof}
 	Using Rellich's identity (cf. \cite{Mitidieri93}, Eq. (2.5)), we have that
	\begin{equation}\label{pol6}
 		\Re \int_{\Om_2}\Delta u_2 (q\nabla \overline{u_2})\, {\rm d}x=-\Re \int_I\partial_\nu u_2(q\nabla \overline{u_2})-\frac{1}{2}(q\cdot\nu)|\nabla u_2|^2\, {\rm d}S.
	\end{equation}
	We multiply \eqref{pol4} by $q\nabla\overline{u_2}$, integrate over $\Om_2$, take the real part and use \eqref{pol6} in order to obtain that
	\begin{align*}
	 	\Re&\bigg(-\la^2\int_{\Om_2}u_2 (q\nabla \overline{u_2})\, {\rm d}x\bigg)
	 		+ \Re\bigg(\int_I \partial_\nu u_2(q\nabla \overline{u_2})-\frac{1}{2}(q\cdot \nu)|\nabla u_2|^2\, {\rm d}S\bigg)\\
	 		& = \Re\int_{\Om_2}(g_2 + i\lambda f_2)(q\nabla \overline{u_2})\, {\rm d}x.
	\end{align*}
	As $q\nabla u_2 = \operatorname{div}(qu_2) - 2u_2$, integration by parts and taking the real part shows
	\[
		\Re\int_{\Om_2}u_2(q\nabla \overline{u_2})\, {\rm d}x = -\|u_2\|_{L^2(\Om_2)}^2-\frac{1}{2}\int_I(q\nu)|u_2|^2\, {\rm d}S
	\]
	and we obtain
	\begin{align*}
	 	\la^2 \|u_2\|_{L^2(\Om_2)}^2&=-\frac{\la^2}{2}\int_I(q\cdot\nu)|u_2|^2\, {\rm d}S + \frac{1}{2}\int_I(q\cdot\nu)|\nabla u_2|^2\, {\rm d}S\\
	 	&\quad -\Re\int_I\partial_\nu u_2(q\nabla \overline{u_2})\, {\rm d}S + \Re\int_{\Om_2}(g_2 + i\la f_2)(q\nabla \overline{u_2})\, {\rm d}x.
	\end{align*}
	Since $q\cdot\nu\leq 0$ on $I$ and $u_1=u_2$ on $I$, we arrive at
	\begin{align*}
		\la^2 \|u_2\|_{L^2(\Om_2)}^2&\leq C\big(\la^2\|u_1\|_{H^1(\Om_1)}^2+|\la|\|U\|_{\HH}\|F\|_\HH\big)+\int_I\big|\partial_\nu u_2(q\nabla \overline{u_2})\big|\, {\rm d}S\\
	  	&\leq C\big(|\la|\|U\|_{\HH}\|F\|_\HH+\|F\|_{\HH}^2\big)+\int_I\big|\partial_\nu u_2(q\nabla \overline{u_2})\big|\, {\rm d}S,
	\end{align*}
	where in the first step we used the trace theorem and in the last step we used \eqref{pol8} as well as $|\la|\geq \la_0$.
\end{proof}

\begin{lemma}\label{lemma:pol3}
	For any $\varepsilon > 0$, there exists a constant $C(\varepsilon) > 0$ such that
 	\[
 		\int_I\big|\partial_\nu u_2(q\nabla \overline{u_2})\big|\, {\rm d}S\leq \varepsilon \|U\|_\HH^2+C(\varepsilon)|\la|^{60}\|F\|_\HH^2.
 	\]
\end{lemma}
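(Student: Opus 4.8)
The plan is to bound the interface integral by the squared $L^2(I)$-norm of $\partial_\nu u_2$ (up to a harmless tangential term) and then to control this trace through interior Sobolev norms of $u_1$, whose growth in $\la$ is made explicit via the elliptic estimates of Section~4; a final interpolation against the a~priori smallness of the low-order norms, combined with Young's inequality, produces the asserted splitting. First I would split the gradient on $I$ into its normal and tangential parts, $\nabla u_2=(\partial_\nu u_2)\nu+(\partial_\tau u_2)\tau$. Since $q$ is bounded on the compact interface, this gives $|\partial_\nu u_2(q\nabla\overline{u_2})|\le C|\partial_\nu u_2|\big(|\partial_\nu u_2|+|\partial_\tau u_2|\big)$, hence $\int_I|\partial_\nu u_2(q\nabla\overline{u_2})|\,{\rm d}S\le C\|\partial_\nu u_2\|_{L^2(I)}^2+C\|\partial_\nu u_2\|_{L^2(I)}\|\partial_\tau u_2\|_{L^2(I)}$. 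Because $u_1=u_2$ on $I$, we have $\partial_\tau u_2=\partial_\tau u_1$ there, so the tangential factor is controlled by interior norms of $u_1$; the whole difficulty sits in $\partial_\nu u_2$.

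To handle $\partial_\nu u_2$ I would use the transmission condition $\BB_2 u_1-\rho\partial_\nu v_1=-\partial_\nu u_2$ on $I$, valid in the trace sense by Theorem~\ref{4.5}, to write $\partial_\nu u_2=\rho\partial_\nu v_1-\BB_2 u_1$ with $\partial_\nu v_1=i\la\partial_\nu u_1-\partial_\nu f_1$. This trades the normal trace of $u_2$ for the third-order operator $\BB_2 u_1$ and for first-order traces of $u_1$ and $f_1$. The key input is then a polynomial-in-$\la$ bound for $\|u_1\|_{H^4(\Om_1)}$, which I would obtain without circularity as follows. First, \eqref{pol4} together with elliptic regularity for the Dirichlet Laplacian (Remark~\ref{4.4} b)) gives $\|u_2\|_{H^2(\Om_2)}\le C\big(\la^2\|u_2\|_{L^2(\Om_2)}+|\la|\|F\|_\HH+\|u_1\|_{H^2(\Om_1)}\big)\le C\la^2\|U\|_\HH+C|\la|\|F\|_\HH$, where only $\|u_1\|_{H^2}\le C\|U\|_\HH$ (not $\|u_1\|_{H^4}$) enters. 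Second, feeding $\BB_1 u_1=0$ and $\BB_2 u_1=\rho\partial_\nu v_1-\partial_\nu u_2$ as boundary data into Corollary~\ref{4.3}, applied to \eqref{pol2} rewritten as $(\la_0+\Delta^2)u_1=(\la_0+\la^2)u_1+i\la\rho\Delta u_1+g_1+i\la f_1-\rho\Delta f_1$, yields $\|u_1\|_{H^4(\Om_1)}\le C\la^2\|U\|_\HH+C|\la|\|F\|_\HH$.

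These crude bounds cannot be used as such, since the factor $\la^2\|U\|_\HH$ is far too large. Instead I would combine them with the trace inequalities $\|\BB_2 u_1\|_{L^2(I)}\le C\|u_1\|_{H^{7/2}(\Om_1)}$ and $\|\partial_\tau u_1\|_{L^2(I)}+\|\partial_\nu u_1\|_{L^2(I)}\le C\|u_1\|_{H^{3/2}(\Om_1)}$, and with the interpolation inequalities $\|u_1\|_{H^{7/2}}\le C\|u_1\|_{H^4}^{7/8}\|u_1\|_{L^2}^{1/8}$ and $\|u_1\|_{H^{3/2}}\le C\|u_1\|_{H^4}^{3/8}\|u_1\|_{L^2}^{5/8}$. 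The decisive point is that the low-order norm is already small: by \eqref{pol8}, $\|u_1\|_{L^2(\Om_1)}\le\|u_1\|_{H^1(\Om_1)}\le C|\la|^{-1}\big(\|U\|_\HH^{1/2}\|F\|_\HH^{1/2}+\|F\|_\HH\big)$. Inserting the $H^4$-bound and this smallness, each trace norm becomes a product of a power of $|\la|$, a power of $\|U\|_\HH$ strictly below the first, and a positive power of $\|F\|_\HH$; the dominant contribution to $\|\partial_\nu u_2\|_{L^2(I)}$ is $\|\BB_2 u_1\|_{L^2(I)}\lesssim(\la^2\|U\|_\HH)^{7/8}\big(|\la|^{-1}\|U\|_\HH^{1/2}\|F\|_\HH^{1/2}\big)^{1/8}\sim|\la|^{13/8}\|U\|_\HH^{15/16}\|F\|_\HH^{1/16}$.

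Finally I would square and apply Young's inequality, choosing the exponents so that the sub-critical power of $\|U\|_\HH$ is raised exactly to $\|U\|_\HH^2$ with coefficient $\varepsilon$, the conjugate factor collecting into $C(\varepsilon)|\la|^{N}\|F\|_\HH^2$. For the term above this gives $|\la|^{13/4}\|U\|_\HH^{15/8}\|F\|_\HH^{1/8}\le\varepsilon\|U\|_\HH^2+C(\varepsilon)|\la|^{52}\|F\|_\HH^2$, and the remaining cross and lower-order terms produce strictly smaller powers of $|\la|$; as $|\la|>\la_0$, all are dominated by $|\la|^{60}$, which yields the claim. The main obstacle is exactly this bookkeeping: the elliptic bounds carry $\|U\|_\HH$ at order $\la^2$, so the estimate closes only because interpolation against the smallness of $\|u_1\|_{L^2(\Om_1)}$ drives the exponent of $\|U\|_\HH$ below $2$ before Young's inequality is applied. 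A secondary point requiring care is to avoid circularity in the interface data by estimating $\|u_2\|_{H^2}$ before $\|u_1\|_{H^4}$.
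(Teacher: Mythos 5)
Your proposal is correct and follows essentially the same route as the paper: express $\partial_\nu u_2$ on $I$ via the transmission condition in terms of $\BB_2 u_1$ and lower-order traces, establish the elliptic bounds $\|u_2\|_{H^2(\Om_2)},\|u_1\|_{H^4(\Om_1)}\lesssim \la^2\|U\|_\HH+|\la|\|F\|_\HH$ in that order to avoid circularity, interpolate against the smallness of $\|u_1\|_{H^1(\Om_1)}$ from \eqref{pol8}, and close with Young's inequality. The only (harmless) deviations are bookkeeping choices — you split $\nabla u_2$ into normal and tangential parts on $I$ rather than bounding $\|\nabla u_2\|_{L^2(I)}$ by $\|u_2\|_{H^{3/2}(\Om_2)}$, and you interpolate $H^{7/2}$ between $H^4$ and $L^2$ rather than $H^4$ and $H^1$ — which actually yields the slightly better exponent $|\la|^{52}$, still within the asserted $|\la|^{60}$.
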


\begin{proof}
	Using the transmission conditions, we can estimate
	\begin{align*}
		\int_I\big|\partial_\nu u_2(q\nabla \overline{u_2})\big|\, {\rm d}&S=\int_I\big|\BB_2 u_1-i\la \rho\partial_\nu u_1
	  		+\rho\partial_\nu f_1\big|\big|q\nabla \overline{u_2}\big|\, {\rm d}S\\\
	  	& \leq C \|\BB_2 u_1-i\la\rho \partial_\nu u_1+\rho\partial_\nu f_1\|_{L^2(I)}\|\nabla u_2\|_{L^2(I)^2}\\
	  	& \leq C\big(\|u_1\|_{H^{7/2}(\Om_1)}+|\la|\|u_1\|_{H^{3/2}(\Om_1)}+\|F\|_\HH\big)\|u_2\|_{H^{3/2}(\Om_2)}.
	\end{align*}
	In order to estimate the terms on the right-hand side, we will use interpolation theory
	for both the terms $\Vert u_1 \Vert_{H^{7/2}(\Om_1)}$ and $\Vert u_2 \Vert_{H^{3/2}(\Om_2)}$.
	Hence, we start with an estimate for $\Vert u_2 \Vert_{H^2(\Omega_2)}.$ \\
	By \eqref{pol4}, $u_2$ satisfies the equation
	\[
		\Delta u_2 = -(\lambda^2 u_2 + g_2 + i\lambda f_2).
	\]
	Therefore, Remark \ref{4.4} b) and $u_1=u_2$ on $I$ yield the estimate
	\begin{align}
	 	\Vert u_2 \Vert_{H^2(\Om_2)}
	 		& \leq C \big( \Vert \lambda^2 u_2 + g_2 + i\lambda f_2 \Vert_{L^2(\Om_2)} + \Vert u_1 \Vert_{H^{3/2}(I)} \big) 								\notag \\
	 		& \leq C \big( \lambda^2 \Vert u_2 \Vert_{L^2(\Om_2)} + \vert \lambda \vert \Vert F \Vert_\HH + \Vert u_1 \Vert_{H^2_\Gamma(\Om_1)} \big) 		\notag \\
	 		& \leq C \vert \lambda \vert \big( \vert \lambda \vert \Vert U \Vert_\HH + \Vert F \Vert_\HH \big).	\label{eq:u2H2}
	\end{align}
	Using interpolation inequality and the equivalence of the $p$-norms on $\R^2$, we get that
	\begin{align}
	 	\Vert u_2 \Vert_{H^{3/2}(\Om_2)}
	 		\leq C \Vert u_2 \Vert_{H^2(\Om_2)}^{1/2} \Vert u_2 \Vert_{H^1(\Om_2)}^{1/2}
	 		& \leq C \Vert u_2 \Vert_{H^2(\Om_2)}^{1/2} \Vert U \Vert_\HH^{1/2} 				\notag \\
	 		& \leq C \big( \vert \lambda \vert \Vert U \Vert_\HH + \vert \lambda \vert^{1/2} \Vert U \Vert_\HH^{1/2} \Vert F \Vert_\HH^{1/2} \big).	
	 	\label{eq:u2H32}
	\end{align}
	In the next step, we will estimate the term $\Vert u_1 \Vert_{H^{7/2}(\Om_1)}$.
	By \eqref{pol2}, $u_1$ satisfies the equation
	\[
		(\lambda + \Delta^2)u_1 = \lambda u_1 + \lambda^2 u_1 + i\lambda \rho \Delta u_1 + g_1 + i\lambda f_1 - \rho \Delta f_1.
	\]
	Hence, Corollary \ref{4.3} states
	\begin{align*}
		\Vert u_1 \Vert_{H^4(\Om_1)}
			& \leq C \big( \Vert \lambda u_1 + \lambda^2 u_1 + i\lambda \rho \Delta u_1 + g_1 + i\lambda f_1 - \rho \Delta f_1 \Vert_{L^2(\Om_1)} \\
				& \qquad \quad + \Vert \mathscr{B}_1 u_1 \Vert_{H^{7/2}(I)} + \Vert \mathscr{B}_2 u_1 \Vert_{H^{1/2}(I)} \big)
	\end{align*}
	due to the homogeneous boundary conditions on $\Gamma.$
	Using the trace theorem, the transmission conditions, \eqref{pol1} and \eqref{pol2} as well as \eqref{eq:u2H2}, we obtain
	\begin{align*}
		\Vert u_1 \Vert_{H^4(\Om_1)}
			& \leq C \left( \vert \lambda \vert \big(\vert \lambda \vert \Vert U \Vert_\HH + \Vert F \Vert_\HH \big) + \Vert \partial_\nu u_2 \Vert_{H^{1/2}(I)} \right) \\
			& \leq C \vert \lambda \vert \big( \vert \lambda \vert \Vert U \Vert_\HH + \Vert F \Vert_\HH \big).
	\end{align*}
	Moreover, note that \eqref{pol8} reformulates to
	\[
	 	\|u_1\|_{H^1(\Om_1)}\leq \frac{C}{|\la|}\big(\|U\|_\HH \|F\|_{\HH}+\|F\|_\HH^2\big)^{1/2}.
	\]
	Again, by interpolation inequality and the equivalence of the $p$-norms on $\R^2$, we thus get that
	\begin{align}
		\Vert u_1 \Vert_{H^{7/2}(\Om_1)}
			& \leq C \Vert u_1 \Vert_{H^4(\Om_1)}^{5/6} \Vert u_1 \Vert_{H^1(\Om_1)}^{1/6} \notag \\
			& \leq C \vert \lambda \vert^{5/6} \big( \vert \lambda \vert \Vert U \Vert_\HH
				+ \Vert F \Vert_\HH \big)^{5/6} \vert \lambda \vert^{-1/6} \big( \Vert U \Vert_\HH \Vert F \Vert_\HH  + \Vert F \Vert_\HH^2)^{1/12} \notag \\
			& \leq C \vert \lambda \vert ^{2/3} \big( \vert \lambda \vert^{5/6} \Vert U \Vert_\HH^{5/6} + \Vert F \Vert_\HH^{5/6} \big)
				\big( \Vert U \Vert_\HH^{1/12} \Vert F \Vert_\HH^{1/12} + \Vert F \Vert_\HH^{1/6} \big) \notag \\
			& \leq C \bigg( \vert \lambda \vert^{3/2} \big( \Vert U \Vert_\HH^{11/12} \Vert F \Vert_\HH^{1/12} + \Vert U \Vert_\HH^{5/6} \Vert F \Vert_\HH^{1/6} \big)\notag \\
				& \qquad \quad + \vert \lambda \vert^{2/3} \big( \Vert U \Vert_\HH^{1/12} \Vert F \Vert_\HH^{11/12} + \Vert F \Vert_\HH \big) \bigg).
					\label{eq:u1H72}
	\end{align}
	Young's inequality
	\begin{align*}
		a^{2-\alpha} b^\alpha \leq \varepsilon a^2 + C(\varepsilon) b^2
	\end{align*}
	for fixed $\alpha \in (0, 2)$ and $\varepsilon > 0$ arbitrary, yields
	\begin{align*}
		\vert \lambda \vert^{5/2} \Vert U \Vert^{23/12}_\HH \Vert F \Vert_\HH^{1/12}
			= \Vert U \Vert_\HH^{23/12} \big( \vert \lambda \vert^{30} \Vert F \Vert_\HH \big)^{1/12}
			\leq \varepsilon \Vert U \Vert_\HH^2 + C(\varepsilon) \vert \lambda \vert^{60} \Vert F \Vert_\HH^2.
	\end{align*}
	Considering the powers of $\vert \lambda \vert$, this is the worst term appearing in the estimate of
	$\big(\|u_1\|_{H^{7/2}(\Om_1)}+|\la|\|u_1\|_{H^{3/2}(\Om_1)}+\|F\|_\HH\big)\|u_2\|_{H^{3/2}(\Om_2)}$. This is due to the fact that in any other term appearing, the power
	of $\Vert U \Vert_\HH$ is less than $\tfrac{23}{12}$ which results in lower powers of $\vert \lambda \vert$ after applying Young's inequality.
	Now, using $\vert \lambda \vert > \lambda_0$, we can conclude
	\begin{align*}
	\int_I\big|\partial_\nu u_2(q\nabla \overline{u_2})\big|\, {\rm d}S
		& \leq C \big(\|u_1\|_{H^{7/2}(\Om_1)}+|\la|\|u_1\|_{H^{3/2}(\Om_1)}+\|F\|_\HH\big)\|u_2\|_{H^{3/2}(\Om_2)} \\
		& \leq \varepsilon \Vert U \Vert_\HH^2 + C(\varepsilon) \vert \lambda \vert^{60} \Vert F \Vert_\HH^2,
	\end{align*}
	where $\varepsilon > 0$ is arbitrary and $C(\varepsilon) > 0$ is a constant only depending on $\varepsilon.$

\end{proof}

We are now able to finish the proof of Theorem \ref{thm:pol}.

\begin{proof}[Proof of Theorem \ref{thm:pol}]
	By \eqref{pol7}, Lemma \ref{lemma:pol2} and Lemma \ref{lemma:pol3} together with Young's inequality applied to the term
	$\vert \lambda \vert \Vert U \Vert_\HH \Vert F \Vert_\HH,$ we get
	\begin{align*}
		\Vert U \Vert_\HH^2 \leq \varepsilon \Vert U \Vert_\HH^2 + C(\varepsilon) \vert \lambda \vert^{60} \Vert F \Vert_\HH^2
	\end{align*}
	for any $\varepsilon > 0$ and a constant $C(\varepsilon) > 0$ only depending on $\varepsilon.$ This shows
	\[
		\Vert U \Vert_\HH \leq C \vert \lambda \vert^{30} \Vert F \Vert_\HH.
	\]
	Taking $F = 0,$ this estimate also shows that $i\R \cap \sigma_p(\AA) = \emptyset.$ Since $\AA^{-1}$ is compact, the spectrum $\sigma(\AA)$ of $\AA$ coincides with
	the point spectrum $\sigma_p(\AA)$ of $\AA$ and we may
	conclude that $i\R \subset \rho(\AA).$ Now, the assertion follows from Theorem \ref{lemma:pol1}.
\end{proof}


\begin{thebibliography}{10}

\bibitem{Agranovich-Denk-Faierman97}
M.~Agranovich, R.~Denk, and M.~Faierman.
\newblock Weakly smooth nonselfadjoint spectral elliptic boundary problems.
\newblock In {\em Spectral theory, microlocal analysis, singular manifolds},
  volume~14 of {\em Math. Top.}, pages 138--199. Akademie Verlag, Berlin, 1997.

\bibitem{Agranovich-Vishik64}
M.~S. Agranovich and M.~I. Vishik.
\newblock Elliptic problems with a parameter and parabolic problems of general
  type.
\newblock {\em Russian Math. Surveys}, 19(3):53--157, 1964.

\bibitem{MR3193931}
M.~Alves, J.~Mu\~noz Rivera, M.~Sep\'ulveda, O.~Vera~Villagr\'an, and M.~a.
  Zegarra~Garay.
\newblock The asymptotic behavior of the linear transmission problem in
  viscoelasticity.
\newblock {\em Math. Nachr.}, 287(5-6):483--497, 2014.

\bibitem{MR3180871}
M.~Alves, J.~Mu\~noz Rivera, M.~Sep\'ulveda, and O.~V. Villagr\'an.
\newblock The lack of exponential stability in certain transmission problems
  with localized {K}elvin-{V}oigt dissipation.
\newblock {\em SIAM J. Appl. Math.}, 74(2):345--365, 2014.

\bibitem{Ammari-Nicaise10}
K.~s. Ammari and S.~Nicaise.
\newblock Stabilization of a transmission wave/plate equation.
\newblock {\em J. Differential Equations}, 249(3):707--727, 2010.

\bibitem{Arango-Lebedev-Vorovich98}
J.~A. Arango, L.~P. Lebedev, and I.~I. Vorovich.
\newblock Some boundary value problems and models for coupled elastic bodies.
\newblock {\em Quart. Appl. Math.}, 56(1):157--172, 1998.

\bibitem{Avalos-Lasiecka-Triggiani16}
G.~Avalos, I.~Lasiecka, and R.~Triggiani.
\newblock Heat-wave interaction in 2--3 dimensions: optimal rational decay
  rate.
\newblock {\em J. Math. Anal. Appl.}, 437(2):782--815, 2016.

\bibitem{Batty-Paunonen-Seifert16}
C.~Batty, L.~Paunonen, and D.~Seifert.
\newblock Optimal energy decay in a one-dimensional coupled wave-heat system.
\newblock {\em J. Evol. Equ.}, 16(3):649--664, 2016.

\bibitem{Borichev-Tomilov10}
A.~Borichev and Y.~Tomilov.
\newblock Optimal polynomial decay of functions and operator semigroups.
\newblock {\em Math. Ann.}, 347(2):455--478, 2010.

\bibitem{Chueshov-Lasiecka10}
I.~Chueshov and I.~Lasiecka.
\newblock {\em Von {K}arman evolution equations}.
\newblock Springer Monographs in Mathematics. Springer, New York, 2010.
\newblock Well-posedness and long-time dynamics.

\bibitem{Denk-Kammerlander18}
R.~Denk and F.~Kammerlander.
\newblock Exponential stability for a coupled system of damped-undamped plate
  equations.
\newblock {\em IMA J. Appl. Math.}, 83:302--322, 2018.

\bibitem{Denk-Schnaubelt15}
R.~Denk and R.~Schnaubelt.
\newblock A structurally damped plate equation with {D}irichlet-{N}eumann
  boundary conditions.
\newblock {\em J. Differential Equations}, 259(4):1323--1353, 2015.

\bibitem{Duyckaerts07}
T.~Duyckaerts.
\newblock Optimal decay rates of the energy of a hyperbolic-parabolic system
  coupled by an interface.
\newblock {\em Asymptot. Anal.}, 51(1):17--45, 2007.

\bibitem{MR2807974}
H.~D. Fern\'andez~Sare and J.~E. Mu\~noz Rivera.
\newblock Analyticity of transmission problem to thermoelastic plates.
\newblock {\em Quart. Appl. Math.}, 69(1):1--13, 2011.

\bibitem{Gazizullin-Paimushin16}
R.~K. Gazizullin and V.~N. Paimushin.
\newblock The transmission of an acoustic wave through a rectangular plate
  between barriers.
\newblock {\em J. Appl. Math. Mech.}, 80(5):421--432, 2016.

\bibitem{Gong-Yang-Zhao17}
B.~Gong, F.~Yang, and X.~Zhao.
\newblock Stabilization of the transmission wave/plate equation with variable
  coefficients.
\newblock {\em J. Math. Anal. Appl.}, 455(2):947--962, 2017.

\bibitem{Hassine16}
F.~Hassine.
\newblock Asymptotic behavior of the transmission {E}uler-{B}ernoulli plate and
  wave equation with a localized {K}elvin-{V}oigt damping.
\newblock {\em Discrete Contin. Dyn. Syst. Ser. B}, 21(6):1757--1774, 2016.

\bibitem{Hassine16a}
F.~Hassine.
\newblock Energy decay estimates of elastic transmission wave/beam systems with
  a local {K}elvin-{V}oigt damping.
\newblock {\em Internat. J. Control}, 89(10):1933--1950, 2016.

\bibitem{Hernandez05}
J.~Hern\'andez~Monz\'on.
\newblock A system of semilinear evolution equations with homogeneous boundary
  conditions for thin plates coupled with membranes.
\newblock In {\em Proceedings of the 2003 {C}olloquium on {D}ifferential
  {E}quations and {A}pplications}, volume~13 of {\em Electron. J. Differ. Equ.
  Conf.}, pages 35--47. Southwest Texas State Univ., San Marcos, TX, 2005.

\bibitem{MR1745475}
I.~Lasiecka and R.~Triggiani.
\newblock {\em Control theory for partial differential equations: continuous
  and approximation theories. {I}, Abstract parabolic systems}, volume~74 of
  {\em Encyclopedia of Mathematics and its Applications}.
\newblock Cambridge University Press, Cambridge, 2000.

\bibitem{Liu-Su06}
H.~Liu and N.~Su.
\newblock Existence and uniform decay of solutions for a class of generalized
  plate-membrane-like systems.
\newblock {\em Int. J. Math. Math. Sci.}, pages Art. ID 83931, 24, 2006.

\bibitem{Mitidieri93}
E.~Mitidieri.
\newblock A {R}ellich type identity and applications.
\newblock {\em Comm. Partial Differential Equations}, 18(1-2):125--151, 1993.

\bibitem{MR2054600}
J.~E. Mu\~noz Rivera and H.~Portillo~Oquendo.
\newblock A transmission problem for thermoelastic plates.
\newblock {\em Quart. Appl. Math.}, 62(2):273--293, 2004.

\bibitem{MunozRivera-Racke17}
J.~E. Mu\~noz Rivera and R.~Racke.
\newblock Transmission problems in (thermo)viscoelasticity with
  {K}elvin-{V}oigt damping: nonexponential, strong, and polynomial stability.
\newblock {\em SIAM J. Math. Anal.}, 49(5):3741--3765, 2017.

\bibitem{Triebel78}
H.~Triebel.
\newblock {\em Interpolation theory, function spaces, differential operators},
  volume~18 of {\em North-Holland Mathematical Library}.
\newblock North-Holland Publishing Co., Amsterdam-New York, 1978.

\bibitem{Zhang-Zhang15}
W.~Zhang and Z.~Zhang.
\newblock Stabilization of transmission coupled wave and {E}uler-{B}ernoulli
  equations on {R}iemannian manifolds by nonlinear feedbacks.
\newblock {\em J. Math. Anal. Appl.}, 422(2):1504--1526, 2015.

\end{thebibliography}


\end{document}